\newcommand{\stkout}[1]{\ifmmode\text{\sout{\ensuremath{#1}}}\else\sout{#1}\fi}
\newcommand{\cI}{\mathcal{I}}
\newcommand{\IE}{\mathbb{I}(E)}
\newcommand{\GammaInt}{\Gamma^{\text{i}}_h}
\newcommand{\GammaExt}{\Gamma^{\text{b}}_h}
\newcommand{\Mesh}{\mathcal{M}_h}
\newcommand{\dspace}{\mathcal{V}^r_h}
\newcommand{\avg}[1]{\left\lbrace\mskip-5mu\lbrace{#1}\right\rbrace\mskip-5mu\rbrace}
\newcommand{\binplus}{{} + }
\newcommand{\binminus}{{} - }
\newcommand{\phantomplus}{\: \phantom{+} \:}
\newcommand{\phantomminus}{\: \phantom{-} \:}
\newcommand{\phantomeq}{\mathrel{\phantom{=}}}
\newcommand{\trace}[1]{\operatorname{tr}_{\gamma}(#1)}
\newcommand{\extop}[1]{\mathcal{L}_{#1}}
\newcommand{\capE}{\mathfrak{c}_E}
\newlength{\dhatheight}
\newcommand{\dhat}[1]{%
    \settoheight{\dhatheight}{\ensuremath{\hat{#1}}}%
    \addtolength{\dhatheight}{-0.35ex}%
    \hat{\vphantom{\rule{1pt}{\dhatheight}}%
    \smash{\hat{#1}}}}
\newtheorem{l2-dissipation-prop}{Proposition}
\newtheorem{l2-stability}[l2-dissipation-prop]{Corollary}
\theoremstyle{definition}
\newtheorem{remark}{Remark}
\newtheorem{definition}{Definition}
\newtheorem{example}{Example}
\theoremstyle{definition}
\newtheorem{lemma}{Lemma}
\newtheorem{proposition}{Proposition}
\newtheorem{theorem}{Theorem}
\title%[An Energy-Preserving DoD Stabilization for the Linear Wave Equation]
{An Energy-Preserving Domain of Dependence Stabilization for the Linear Wave Equation on Cut-Cell Meshes}
\author{Gunnar Birke, Christian Engwer, Sandra May, Louis Petri, Hendrik Ranocha}
\date{}
\begin{document}

\newpage

\maketitle

\begin{abstract}
We present an energy-preserving (either energy-conservative or energy-dissipative) domain of dependence stabilization method for the linear wave equation on cut-cell meshes. Our scheme is based on a standard discontinuous Galerkin discretization in space and an explicit (strong stability preserving) Runge Kutta method in time. Tailored stabilization terms allow for selecting the time step length based on the size of the background cells rather than the small cut cells by propagating information across small cut cells. The stabilization terms preserve the energy stability or energy conservation property of the underlying discontinuous Galerkin space discretization. Numerical results display the high accuracy and stability properties of our scheme.
\end{abstract}

\section{Introduction}

Cut-cell meshes have been the focus of considerable research over the years due to their easy generation compared to more traditional meshing methods. Objects or interfaces are placed inside a so-called background mesh, often Cartesian, and their intersections with elements from the background mesh are removed. Although this process can be implemented very efficiently, it gives up nearly any control over the shape and sizes of mesh elements, resulting potentially in arbitrarily small cut cells. In the context of solving first-order hyperbolic conservation laws, this leads to the \textit{small cell problem}: severe and practically infeasible time step size restrictions when explicit time stepping methods are employed.

Over the last few decades a variety of different approaches have emerged to handle the \textit{small cell problem}. Perhaps the conceptually easiest method is so called cell merging/agglomeration where small cut cells are merged with their bigger neighbors, forming a new grid without small cut cells; see, e.g.,  \cite{Bayyuk_Powell_vanLeer, Quirk1994} and also \cite{QIN201324, Oberlack2016} for more recent developments.
Although cell merging is conceptually simple, a general implementation is very involved and requires working directly on the mesh. 

The alternative line of attack is a suitable modification of the numerical scheme in the neighborhood of small cut cells. Here, several solution approaches for solving hyperbolic conservation laws have been developed in the context of finite volume (FV) and discontinuous Galerkin (DG) schemes. Examples for FV schemes are the \textit{h-box}-method~\cite{Berger_Helzel_Leveque_2003, Berger_Helzel_2012}, flux redistribution~\cite{Chern_Colella, Colella2006}, dimensionally split methods~\cite{Klein_cutcell, Gokhale_Nikiforakis_Klein_2018}, state redistribution (SRD) \cite{srd_fv, srd_dg}, and mixed explicit-implicit methods \cite{May_Berger_explimpl}. To our knowledge only few approaches exist for higher-order DG finite element methods discretizations. Specifically we want to mention the extension of SRD to DG~\cite{srd_dg}, the Domain of Dependence (DoD) stabilization~\cite{DoD_2d_linadv_2020, DoD_1d_nonlin_2022} and adaptations of the ghost penalty stabilization hyperbolic problems~\cite{Fu_Kreiss_2021, Sticko_Kreiss_2016_Nitsche},
originally developed for elliptic equations~\cite{Burman_Hansbo_2012}.

Despite this active development, the \textit{small cell problem} still poses a significant challenge w.r.t. the aim of explicit, higher-order methods with provable  stability and conservative guarantees. Regarding the analysis of higher-order cut-cell methods for the wave equation, we are aware of two approaches in the context of DG schemes. Sticko and Kreiss \cite{Sticko_Kreiss_2016_Nitsche,Sticko_Kreiss_2019_higherorder_wave} proposed a ghost penalty type stabilization for the second-order formulation and proved energy stability. Chan and co-authors \cite{srd_wave_eq_energy_stable} obtained results for the SRD method applied to the wave equation in first-order formulation. Both approaches rely on a stabilization that is not based on physical properties of the PDE operator; as a consequence the authors are able to prove energy stability, but energy conservation is violated.

Our approach for designing an energy-stable and energy-conservative scheme
that solves the \textit{small cell problem} 
is based on 
the DoD method, first introduced in \cite{DoD_2d_linadv_2020} for the linear advection equation.
The idea is to augment a standard DG scheme by spatial stabilization terms on small cut cells to ensure stability of the time discretization under reasonable time step sizes.
The design of the stabilization terms depends on the problem that needs to be solved.
The DoD stabilization was extended to nonlinear equations in 1d in \cite{DoD_1d_nonlin_2022}, and as 
a first-order version
to linear systems in 2d in \cite{fvca_proc_dod}. 
Both works are not straightforward to generalize; \cite{DoD_1d_nonlin_2022} makes significant use of the 1d structure, whereas the construction in \cite{fvca_proc_dod} is only feasible for for piecewise constant polynomials. Regarding the analysis of the DoD method, fully discrete stability results were obtained for the linear advection equation; in 2d for piecewise constant polynomials \cite{error-estimate-dod} and in 1d for arbitrary order polynomials \cite{petri2025domain}. Using the stability proof, \cite{error-estimate-dod} further derives an a-priori error estimate in 2d.  Additionally in \cite{petri2026kinetic} a SBP property was shown for the DoD stabilization, again in 1d and for the linear advection equation. This work uses a splitting between central and dissipative fluxes and we will apply a very similar idea here.

\par The main contribution of this paper is a generalization of the original DoD stabilization \cite{DoD_2d_linadv_2020} which allows us to derive cut-cell stabilizations for the linear wave equation, including an appropriate handling of reflecting boundary conditions.
The generalized construction is based on a distinction between central fluxes and dissipative fluxes, which are handled separately. To stabilize high-order discretizations, we introduce the concept of, as we call them, \emph{propagation forms}, generalizing the initial extension of fluxes. These propagation forms need to fulfill a particular integration-by-parts style structure. Using this abstract property, we can prove energy stability and even derive an energy-conservative semi-discretization in space, by controlling the added dissipation. A technical challenge was the handling of reflecting boundary conditions, which is again simplified by the distinction of central fluxes and dissipative fluxes and the construction of suitable propagation forms.

This article is structured as follows: In Section~\ref{sec: base setting} we introduce the linear wave equation as our model equation as well as the standard DG scheme on which our stabilized scheme is based upon. 
Section~\ref{sec: dod section} contains the description of the generalized DoD stabilization itself and the particular construction for the wave equation.
Our main theoretical result is presented in Section~\ref{sec: theoretical section} where we prove that our stabilized scheme is energy-stable/conservative, i.e., it preserves the energy stability/conservation property of the underlying DG scheme. 
In Section~\ref{sec: numerical section} we present numerical results, displaying the accuracy and long term stability properties of our scheme. We also provide a short discussion on numerical precision issues.

\section{General setting} \label{sec: base setting}

We consider the acoustic wave equation written in its first-order form. Let
\begin{align}
\label{eq:state-vector}
u &: \Omega \times [0,T] \rightarrow \mathbb{R}^3,\quad
u(x,t) = \begin{pmatrix}
    p(x, t)\\v_1(x, t)\\v_2(x, t)
\end{pmatrix},\quad \Omega \subset \mathbb{R}^2
\end{align}
{be the state vector with pressure $p$ and velocity components $v = (v_1, v_2)^T$. In the following we will drop the explicit dependence on $x$ and $t$. Let}
\begin{align}
\label{eq:physical-flux}
f(u) &= \begin{pmatrix}f_1(u)\\f_2(u)\end{pmatrix} 
= \begin{pmatrix}A_1 u\\ A_2 u\end{pmatrix},\quad
A_1 = {\footnotesize\left(\begin{array}{@{}c@{~\,}c@{~\,}c@{}}
        0 & c & 0\\
        c & 0 & 0\\
        0 & 0 & 0
    \end{array}\right)}, \;
A_2 = {\footnotesize\left(\begin{array}{@{}c@{~\,}c@{~\,}c@{}}
        0 & 0 & c\\
        0 & 0 & 0\\
        c & 0 & 0
    \end{array}\right)},% \quad c > 0 \text{ constant} \Sandra{??},
\end{align}
be the flux function with $c > 0$ a constant denoting the speed of sound. Given a unit vector $n$ the flux in direction $n$ is given as $f_n = f \cdot n = n_1 f_1 + n_2 f_2$ and can be computed as $f_n(u) = A_n u$ with
\begin{equation} \label{eq: directional flux matrix}
A_n = A \cdot n = n_1 A_1 + n_2 A_2.
\end{equation}
We point out that $A_n$ is symmetric and has two nonzero eigenvalues $c$ and $-c$.

The wave equation, written as a first-order system of hyperbolic conservation laws, takes the form
\begin{subequations}
\begin{alignat}{3}
    \partial_t u + \nabla \cdot f(u) & = 0 & \quad & \text{in } \Omega \times (0,T),\\ %\Omega \subset \mathbb{R}^2,\\
    \label{eq: wall boundary condition} v \cdot n & = 0 & & \text{on } \partial \Omega,\\
    u & = u_0 & & \text{on } \Omega \text{ at } t=0,%\Omega \times \{ t = 0 \},
\end{alignat}
\end{subequations}
where $n$ is the outward unit normal vector and $u_0$ the initial condition. Equation~\eqref{eq: wall boundary condition} specifies a reflecting wall boundary condition.

\subsection{Base discretization}
To discretize the domain $\Omega$, we first start with a rectangular domain $\widehat{\Omega} \supset \Omega$ and a so-called background mesh $\widehat{\mathcal{M}}_h$ of $\widehat{\Omega}$, a structured Cartesian mesh
of mesh size $h$,
such that $\bigcup_{\widehat{E} \in \widehat{\mathcal{M}}_h} \overline{\widehat{E}} = \overline{\widehat{\Omega}}$.

With this we define a cut-cell mesh on $\Omega$ consisting of elements
\[
\mathcal{M}_h = \{ E = \widehat{E} \cap \Omega : \widehat{E} \in \widehat{\mathcal{M}}_h \},
\]
and internal and boundary faces given by
\begin{align*}
    \GammaInt & = \{ \gamma = \bar{E}_1 \cap \bar{E}_2 : |\gamma| > 0, E_1, E_2 \in \mathcal{M}_h \},\\
    \GammaExt & = \{ \gamma = \bar{E} \cap \partial \Omega : E \in \mathcal{M}_h \},\\
    \Gamma_h & = \GammaInt \cup \GammaExt,
\end{align*}
where $|\gamma|$ denotes the 1D volume (length).
Given an element $E \in \Mesh$ we denote its sets of
internal, boundary, and all faces by
\begin{gather*}
\GammaInt(E) = \{ \gamma \in \GammaInt : \gamma \cap \partial E \neq \emptyset \}, \quad \GammaExt(E) = \{ \gamma \in \GammaExt : \gamma \cap \partial E \neq \emptyset \}, \\%\quad
\text{and} \quad \Gamma_h(E) = \GammaInt(E) \cup \GammaExt(E).
\end{gather*}
We further define for each element $E$ the index-set $\mathbb{I}(E) = \{1, \ldots, |\Gamma_h(E)| \}$, consecutively numbering all faces, such that any $i \in \mathbb{I}(E)$ identifies exactly one face $\gamma_i \in \Gamma_h(E)$. If $\gamma_i \in \GammaInt(E)$ is an internal face we denote by $E_i$ the corresponding neighbor element. If $\gamma_i \in \GammaExt(E)$ and there is no physical neighbor at this face, we will abuse notation by still writing $E_i$ and using this as an index variable later. Formally we can think of $E_i$ as a ghost cell which shares the face $\gamma_i$ with $E$.

For each internal face $\gamma = \bar{E}_1 \cap \bar{E}_2 \in \GammaInt$ we fix a unique left-sided cell $E_{\gamma}^L \in \{E_1, E_2 \}$. The corresponding right-sided cell will be denoted by $E_{\gamma}^R$. 
The unit normal vector $n_{\gamma}$ on $\gamma$ is uniquely defined to point from left to right, i.e.,
\[
n_{\gamma} = n_{E_{\gamma}^L} |_{\gamma}.
\]
Given a face $\gamma = \bar{E}_{\gamma}^L \cap \bar{E}_{\gamma}^R \in \GammaInt$ and a function $w$ defined on $E_{\gamma}^L \cup E_{\gamma}^R$ such that both $w|_{E_{\gamma}^L}$ and $w|_{E_{\gamma}^R}$ have well-defined traces on $\gamma$, we define left and right values of $w$ as
\[
w_{\gamma}^L = \trace{w|_{E_{\gamma}^L}}, \; w_{\gamma}^R = \trace{w|_{E_{\gamma}^R}},
\]
and averages and jumps
\[
\avg{w}_{\gamma} = \frac{1}{2} (w^L_{\gamma} + w^R_{\gamma}), \quad \llbracket w \rrbracket_{\gamma} = w^L_{\gamma} - w^R_{\gamma}.
\]

In the DG discretization, the reflecting boundary condition $v \cdot n = 0$ is not imposed strongly; instead, it is implemented in a weak sense by constructing suitable numerical fluxes on the boundary. To do so we define the mirroring operator.
\begin{definition}[Mirroring operator]
We introduce a mirroring operator that represents a suitable notion of a reflected state on the boundary, see also \cite{dgm_dol_fei}. In the course of the paper we will further need to mirror points, as well as states. By abuse of notation we will denote all these operations by $\mathfrak M_n$.

Let a cell $E \in \Mesh$ and a boundary face $\gamma \in \GammaExt(E)$ be given.
For a state $u=(p,v)^T$ we define the state mirrored at $\gamma$ component wise as
\begin{equation} \label{eq: reflecting wall boundary operator}
\mathfrak{M}_n : \mathbb{R}^3 \to \mathbb{R}^3,
\quad
\begin{pmatrix}
    p \\ v
\end{pmatrix} \mapsto 
\begin{pmatrix}
    p \\ v - 2 (v \cdot n) n
\end{pmatrix}.
\end{equation}

This definition is consistent, as for the exact solution with $v \cdot n = 0$ the reflected velocity simply changes the sign of the normal component. The pressure should be continuous under reflection, which is also reflected in the definition. If an approximate $\tilde v$ does not fulfill the boundary condition the operator introduces a jump such that the average $\tfrac 1 2 (\tilde v + \mathfrak M_n(\tilde v)) = 0$ and the jump will later show up as a penalty, weakly enforcing the boundary condition.

We further introduce an extension of this definition to the whole element $E$, allowing to mirror a complete vector-valued polynomial function $u \in \mathcal{P}^r(E)^3$ instead of just a single pointwise state. We first denote the orthogonal projection of $x \in E$ onto $\gamma$ as $x^\perp$ and $n_\gamma^\perp$ as the unit outer normal in $x^\perp$.
With this we define the generalized mirroring operator
\begin{equation}\label{eq: generalized reflecting wall boundary operator}
    \mathfrak{M}_n : \mathcal{P}^r(E)^3 \to \mathcal{P}^r(E)^3,
\quad
\bigg( x \mapsto
\begin{pmatrix}
    p(x) \\ v(x)
\end{pmatrix} \bigg) \mapsto 
\bigg(x \mapsto
\begin{pmatrix}
    p(x) \\ v(x) - 2 (v(x^\perp) \cdot n) n
\end{pmatrix}\bigg),
\end{equation}
which allows to evaluate $\mathfrak M_n(u)(x) ~\forall x\in E$. We note that this definition coincides on $\gamma$ with the definition of mirroring the state \eqref{eq: reflecting wall boundary operator}.
\end{definition}

\begin{remark}[Skew symmetry]
Note that $\mathfrak M_n$ is skew-symmetric on $\gamma$:
\begin{equation}\label{eq: boundary reflection skew symmetry}
\langle A_n \mathfrak{M}_n(u), w \rangle = - \langle A_n u, \mathfrak{M}_n(w) \rangle.
\end{equation}
This is an important property of as it leads to energy conservation on reflecting boundaries.
\end{remark}

We can now state the standard DG discretization in  space that will later be stabilized appropriately. As a function space we choose
\[
\dspace \coloneqq \{ v_h \in (L^2(\Omega))^3 \; \big| \; \forall E \in \mathcal{M}_h : (v_h)_E \coloneqq (v_h)_{|E} \in P^r(E)^3 \},
\]
with $P^r(E)$ being the space of polynomials of degree $r$ on $E$.
On this space we consider discretizations that can be formulated as a bilinear operator $a_h:\dspace \times \dspace \to \mathbb{R}$ using central fluxes and an additional diffusive stabilization operator $s_h:\dspace \times \dspace \to \mathbb{R}$:
\begin{align}
\label{eq:base-bilinear-form}
   \begin{split}
    a_h(u_h, w_h) = & - \sum_{E \in \mathcal{M}_h} \int_E f(u_h) \cdot \nabla w_h + \sum_{\gamma \in \GammaInt} \int_{\gamma} \langle \tfrac{1}{2}\big[f_n(u_{\gamma}^L) + f_n(u_{\gamma}^R)\big], \llbracket w_h \rrbracket \rangle\\
    & + \sum_{\gamma \in \GammaExt} \int_{\gamma} \langle \tfrac{1}{2} \big[f_n(u_h) +  f_n(\mathfrak{M}_n(u_h)) \big] , w_h \rangle
\end{split}
\end{align}
where $\langle \cdot, \cdot \rangle$ denotes the Euclidean scalar product on $\mathbb{R}^3$ and
\begin{align} \label{eq: base scheme dissipative term}
   \begin{split}
    s_h(u_h, w_h) = & \phantom{\: +} \sum_{\gamma \in \GammaInt} \int_{\gamma} \langle S_n(u^L_{\gamma}, u^R_{\gamma}), \llbracket w_h \rrbracket \rangle\\
    & + \sum_{\gamma \in \GammaExt} \int_{\gamma} \langle S_n(u_h, \mathfrak{M}_n(u_h)) , w_h \rangle
\end{split}
\end{align}
where
\[
S_n: \mathbb{R}^3 \times \mathbb{R}^3 \to \mathbb{R}^3
\]
with unit vector $n$ is a dissipative operator. Here $u^L_{\gamma}$ and $u^R_{\gamma}$ denote the state in the left or right neighbor, as defined above. 
Note that throughout the paper we will often consider two arbitrary states, not directly connected to a specific face. In this case we use the general notation of $u^\mu$ and $u^\nu$ for these two states, to indicate that they do not belong to a mesh face.

We assume the diffusive operator $S_n$ to be linear, i.e. $S_n(u^\mu, u^\nu) = S_n^M (u^\mu - u^\nu)$ with $S_n^M$ being positive semi-definite. A common choice for the wave-equation is the Lax-Friedrichs dissipation 
\begin{equation} \label{eq: lax-friedrichs dissipation}
S_n(u^\mu, u^\nu) = \frac{\rho(A_n)}{2} (u^\mu - u^\nu)
\end{equation}
with $\rho(A_n)$ being the spectral radius of the matrix $A_n$ defined in \eqref{eq: directional flux matrix}, which is equal to the speed of sound $c$ as introduced in \eqref{eq:physical-flux}.

The complete base semi-discrete scheme reads as follows: Find $u_h(t) \in \dspace$ such that
\begin{equation}\label{eq: base scheme}
     \int_{L^2(\Omega)} \langle \partial_t u_h(t), w_h \rangle  + a_h(u_h(t), w_h) + s_h(u_h(t), w_h) = 0 \quad \forall w_h \in \dspace.
\end{equation}
The scheme is then completed by applying an explicit Runge-Kutta (RK) scheme in time.

\subsection{Surface forms and numerical fluxes}
We can formulate the scheme in terms of numerical fluxes $\mathcal{H}_n$. Setting 
\[
\mathcal{H}_n(u^\mu, u^\nu) = \frac{1}{2} \big [f_n(u^\mu) + f_n(u^\nu)\big] + S_n(u^\mu, u^\nu)
\]
the discretization \eqref{eq: base scheme} can be written as
\[
     \int_{L^2(\Omega)} \langle \partial_t u_h(t), w_h \rangle  + a^{\text{num}}_h(u_h(t), w_h) = 0 \quad \forall w_h \in \dspace
\]
with
\begin{align*}
    a^{\text{num}}_h(u_h, w_h) & = - \sum_{E \in \mathcal{M}_h} \int_E f(u_h) \cdot \nabla w_h + \sum_{\gamma \in \GammaInt} \int_{\gamma} \langle \mathcal{H}_n(u^L_{\gamma}, u^R_{\gamma}), \llbracket w_h \rrbracket \rangle\\
    & \phantomminus + \sum_{\gamma \in \GammaExt} \int_{\gamma} \langle \mathcal{H}_n(u_h, \mathfrak{M}_n(u_h)), w_h \rangle\\
    & =  a_h(u_h, w_h) + s_h(u_h, w_h).
\end{align*}
Note that with the choice \eqref{eq: lax-friedrichs dissipation}, this simply corresponds to using the Lax-Friedrichs flux in a standard DG formulation.

Although this is the common form when working with hyperbolic conservation laws, splitting the numerical flux into central and dissipative parts will significantly simplify our presentation due to their different nature. To see this and for later purposes, we introduce the notation of surface forms $b_{\gamma}: L^2(\gamma)^3 \times L^2(\gamma)^3 \times L^2(\gamma)^3 \to \mathbb{R}$
\begin{equation} \label{eq: surface forms}
b_{\gamma}(u^\mu, u^\nu, w) = \int_{\gamma} \langle \tfrac{1}{2}\big [f_n(u^\mu) + f_n(u^\nu) \big], w \rangle,
\end{equation}
where $\gamma \in \Gamma_h$ is a mesh face.
On the one hand we have the identity
\begin{align*}
    a_h(u_h, w_h) = & - \sum_{E \in \mathcal{M}_h} \left( \int_E f(u_h) \cdot \nabla w_h + \sum_{\gamma \in \GammaInt(E)} b_{\gamma} (u_{\gamma}^L, u_{\gamma}^R, w_h|_E) \right. \\
   & \left. + \sum_{\gamma \in \GammaExt(E)} b_{\gamma} (u_h, \mathfrak{M}_n(u_h) , w_h) \right)
\end{align*}
which connects these surface forms to the space discretization. On the other hand we have the following statement.
\begin{lemma}
Let $E \in \Mesh$ be a mesh element, $u^\mu, u^\nu, w \in \mathcal{P}^r(E)^3$ vector-valued polynomial functions. Then it holds  
\begin{equation} \label{eq: surface forms int by parts}
\sum_{\gamma\in \Gamma_h(E)} b_{\gamma}^E(u^\mu, u^\nu, w) = \int_E \tfrac{1}{2} [f(u^\mu) + f(u^\nu)] \cdot \nabla w  + \int_E   \tfrac{1}{2} \langle \nabla \cdot [f(u^\mu) + f(u^\nu)], w \rangle.
\end{equation}
\end{lemma}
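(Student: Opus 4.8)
The plan is to recognize the left-hand side as a single boundary integral over $\partial E$ and then apply the divergence theorem componentwise. First I would fix the orientation convention implicit in the superscript of $b_{\gamma}^E$: on each face $\gamma \in \Gamma_h(E)$ the directional flux $f_n$ is taken with $n$ the outward unit normal of $E$ (not the fixed left-to-right normal $n_\gamma$ used elsewhere). Since the faces in $\Gamma_h(E)$ cover $\partial E$ up to a set of measure zero, summing the surface forms \eqref{eq: surface forms} gives
\[
\sum_{\gamma\in \Gamma_h(E)} b_{\gamma}^E(u^\mu, u^\nu, w) = \int_{\partial E} \big\langle \tfrac{1}{2}\big[f_n(u^\mu) + f_n(u^\nu)\big], w \big\rangle.
\]

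Next, by linearity of $f(u) = (A_1 u, A_2 u)^T$ and of the inner product, it suffices to establish the single-state identity
\[
\int_{\partial E} \langle f_n(u), w \rangle = \int_E f(u) \cdot \nabla w + \int_E \langle \nabla \cdot f(u), w \rangle
\]
for an arbitrary polynomial state $u \in \mathcal{P}^r(E)^3$, and then average it over $u = u^\mu$ and $u = u^\nu$. To prove it, I would introduce the scalar vector field $g = (\langle f_1(u), w\rangle,\ \langle f_2(u), w\rangle)$ on $E$ and observe that its normal trace is exactly the integrand on the left, $g \cdot n = n_1 \langle f_1(u), w\rangle + n_2 \langle f_2(u), w\rangle = \langle f_n(u), w\rangle$. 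The divergence theorem on the (Lipschitz, polygonal) cut cell $E$ then yields $\int_{\partial E} g \cdot n = \int_E \nabla \cdot g$, and expanding $\nabla \cdot g = \sum_i \partial_i \langle f_i(u), w\rangle$ by the product rule splits it into $\sum_i \langle \partial_i f_i(u), w\rangle + \sum_i \langle f_i(u), \partial_i w\rangle = \langle \nabla\cdot f(u), w\rangle + f(u)\cdot\nabla w$, which is precisely the right-hand side.

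The computation is essentially the standard integration-by-parts underlying the DG volume term, so the points requiring care are bookkeeping rather than substance. The main obstacle I anticipate is the orientation: one must ensure the normal in each $b_{\gamma}^E$ is the outward normal of $E$, otherwise a sign is lost on the faces where $E$ happens to be the right-sided cell $E_\gamma^R$. Beyond that, I would only note that the divergence theorem is applicable because $E$ is a bounded Lipschitz domain and $u^\mu, u^\nu, w$ (hence $f(u^\mu), f(u^\nu)$ and their divergences) are polynomials, so every integrand is smooth on $\overline{E}$ and no regularity issue arises from the cut geometry.
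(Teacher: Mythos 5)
Your proposal is correct and follows essentially the same route as the paper: the paper likewise treats each state separately, applies integration by parts (your divergence-theorem argument on $g$ is just that step spelled out), and adds the two resulting identities. Your explicit remark about taking the outward normal of $E$ in $b_{\gamma}^E$ is a useful clarification of a convention the paper leaves implicit, but it does not change the substance of the argument.
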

\begin{proof}
By integration by parts, we have
\begin{align*}
\sum_{\gamma\in \Gamma_h(E)} \int_{\gamma} \langle \tfrac{1}{2} f_n(u^\mu), w \rangle & = \int_E \sum_{k=1}^d \langle \tfrac{1}{2} f_k(u^\mu), \partial_k w \rangle + \int_E \sum_{k=1}^d \langle \tfrac{1}{2} \partial_k f_k(u^\mu), w \rangle\\
& = \int_E \tfrac{1}{2} f(u^\mu) \cdot \nabla w  + \int_E   \tfrac{1}{2} \langle \nabla \cdot f(u^\mu), w \rangle
\end{align*}
and similarly for $u^\nu$. Adding both equations together yields the result.
\end{proof}
So for the central flux, we have an integration by parts formula that can also be seen as a balance of fluxes. It is this balance of fluxes that we will exploit to construct our method. Constructing a similar balance for dissipative fluxes is highly non-trivial and does not seem to provide any benefit. Splitting our numerical fluxes in central and dissipative parts thus seems natural.

\section{Domain of Dependence Stabilization} \label{sec: dod section}
In order to overcome the small cell problem and allow explicit time stepping methods with a CFL condition that is independent of the cut-cell size, we now introduce a generalization of the original DoD discretization, as it was described in \cite{DoD_2d_linadv_2020}.

As in \cite{DoD_2d_linadv_2020} we add stabilization terms on small cut cells to overcome these time step restrictions. Whether a cell is small is decided based on its volume size relative to its background cell.

\begin{definition}[Cell classification]
We denote the volume fraction of an element $E \in \Mesh$ by
\[
\alpha_E = \frac{|E|}{|\widehat{E}|},
\]
where $\widehat{E}$ is the corresponding background cell. We then define {\em the set of stabilized cut cells} as
\begin{equation}\label{eq: set of stab cut cells}
\cI = \cI_{\alpha} = \{ E \in \mathcal{M}_h \; | \; \alpha_E < \alpha \}
\end{equation}
with $\alpha \in (0, 1)$ being the threshold for volume fractions of cut cells that will be stabilized.
\end{definition}
Typically not all cut cells need stabilization and often it is advisable to lower the time step size slightly to allow for a slightly smaller $\alpha$. In section~\ref{sec: numerical section} we will give a concrete choice for our chosen test cases.

In the following we will make a few assumptions on the set $\cI$ of small cut cells.
\begin{enumerate}[label=(\roman*)]
    \item \label{item: no small neighbors} For any small cut-cell $E \in \cI$, there is no $i \in \IE$ such that $E_i \in \cI$ or in other words, small cut cells do not have small neighbors.
    \item \label{item: reflecting wall face assumption} For any small cut-cell $E \in \cI$, if there is a face $\gamma \in \GammaExt(E)$, its outer normal vector $n_\gamma$ is constant. In other words, $\gamma$ is a subset of a hyperplane in $\mathbb{R}^2$.
    \item \label{item: no anisotropic small cells}There is a constant $\rho > 0$ such that for any $E \in \cI$ it holds that $h_E^d \leq \rho |E|$ where $h_E$ is the diameter of the cell $E$.
\end{enumerate}

\begin{remark}
Assumption \ref{item: no small neighbors} is the same as in \cite{DoD_2d_linadv_2020}, where as condition~\ref{item: no anisotropic small cells}
limits the amount of anisotropy in the cut-cell mesh (at least for small cells).
The later is only currently needed due to technical limitations of our implementation. Assumption \ref{item: reflecting wall face assumption} is a restriction on the geometry, which relates to the handling of reflecting boundary conditions.

The general framework that we present here does
in principal
not require the conditions \ref{item: reflecting wall face assumption} and \ref{item: no anisotropic small cells}. It is our concrete construction and implementation that we used for our numerical results in section \ref{sec: numerical section} which is limited in this regard. Condition \ref{item: no small neighbors} is more fundamental and will require an extension of our general framework that can handle patches of small cut cells.
\end{remark}

Given these assumptions, we will now construct the DoD stabilized semi-discrete scheme. It takes the form: Find $u_h(t) \in \dspace$ such that for all $w_h \in \dspace$
\begin{equation}
\label{eq: stabilized scheme}
\int_{\Omega} \langle d_t u_h(t), w_h \rangle  + a_h(u_h(t), w_h) + J^a_h(u_h(t), w_h) + s_h(u_h(t), w_h) + J^s_h(u_h(t), w_h) = 0.
\end{equation}
The stabilization terms $J^a_h$ and $J^s_h$ contain element stabilization terms for all cells $E \in \mathcal{I}$ that are considered small and are given as
\begin{subequations}
\label{eq: cell wise stabilization}
\begin{align}
J^{a}_h(u_h, w_h) & = \sum_{E \in \mathcal{I}} J^{0, E}_h(u_h, w_h) + \sum_{E \in \mathcal{I}} J^{1, E}_h(u_h, w_h) \\
J^{s}_h(u_h, w_h)  & = \sum_{E \in \mathcal{I}} J^{s, E}_h(u_h, w_h).
\end{align}
\end{subequations}
Before giving the definitions of $J_h^{0,E}$, $J_h^{1,E}$ and $J^{s, E}_h$ we introduce some more terminology and concepts. First let us recall the \textit{small cell problem}: Using a time step size $\Delta t = \mathcal{O}(h)$, with $h$ being the mesh size of the background mesh, will lead to over-/undershoots for updates on the cut-cell mesh. This behavior is related to characteristics reaching beyond the immediate neighborhood of an element in the mesh, as illustrated in figure~\ref{fig: small cell characteristics}. This prevents information from reaching its proper destination. We follow the main idea behind the DoD stabilization, which was already laid out in \cite{DoD_2d_linadv_2020}, and construct the stabilization terms in such a way that the numerical domain of dependence of elements in the neighborhood of a small cell is extended, see figure~\ref{fig: domain of dependence} for an illustration. To do so, we will work with extended supports of local functions.

\begin{figure}[thp]
    \centering
    \begin{tikzpicture}
    \begin{scope}[xscale=2.0]
    \draw (-2.5, 0.0) -- (2.5, 0.0);
    \draw (0.0, -0.1) -- (0.0, 0.1);
    \draw (0.3, -0.1) -- (0.3, 0.1);
    \draw (1.0, -0.1) -- (1.0, 0.1);
    \draw (2.0, -0.1) -- (2.0, 0.1);
    \draw (-1.0, -0.1) -- (-1.0, 0.1);
    \draw (-2.0, -0.1) -- (-2.0, 0.1);

    \draw (-2.5, 1.0) -- (2.5, 1.0);
    \draw (0.0, 0.9) -- (0.0, 1.1);
    \draw (0.3, 0.9) -- (0.3, 1.1);
    \draw (1.0, 0.9) -- (1.0, 1.1);
    \draw (2.0, 0.9) -- (2.0, 1.1);
    \draw (-1.0, 0.9) -- (-1.0, 1.1);
    \draw (-2.0, 0.9) -- (-2.0, 1.1);

    \node at (0.15, -0.25) {$E$};
    \node at (-0.45, -0.25) {$E_{\text{up}}$};
    \node at (0.7, -0.25) {$E_{\text{dw}}$};

    \draw[orange, thick] (1.0, 1.0) -- (0.5, 0.0);
    \draw[orange, thick] (0.3, 1.0) -- (-0.2, 0.0);
    \draw[orange, thick] (0.4, 1.0) -- (-0.1, 0.0);
    \draw[orange, thick] (0.5, 1.0) -- (0.0, 0.0);
    \draw[orange, thick] (0.6, 1.0) -- (0.1, 0.0);
    \draw[orange, thick] (0.7, 1.0) -- (0.2, 0.0);
    \draw[orange, thick] (0.8, 1.0) -- (0.3, 0.0);
    \draw[orange, thick] (0.9, 1.0) -- (0.4, 0.0);

    \node[anchor=west] at (-3.0, 0.0) {$t^n$};
    \node[anchor=west] at (-3.0, 1.0) {$t^{n+1}$};
    \end{scope}
    \end{tikzpicture}
    \caption{Characteristics entering $E_{\text{dw}}$ in the time interval $[t^n, t^{n+1}]$. Since the neighbor cell $E$ is too small, these characteristics even reach into $E_{\text{up}}$ at time $t^n$.}
    \label{fig: small cell characteristics}
\end{figure}
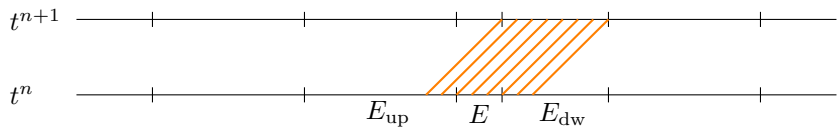

\begin{definition}[Extension operator] We follow the definition proposed in \cite{DoD_2d_linadv_2020}.
Given a discrete function $w_h \in \dspace$ and a cell $E \in \mathcal{M}_h$ we introduce the extension operator
\[
\extop{E}: \dspace \to (P^r(\Omega))^3, \quad w_h \mapsto \extop{E}(w_h)
\]
defined via
\[
\extop{E}(w_h)(x) = w_h|_{E}(x), \quad x \in E,
\]
i.e., we select the polynomial components of $w_h$ on $E$ and extend them to the complete domain. This definition is well-posed since defining a polynomial function on an open domain is equivalent to defining it on the complete real space.
\end{definition}

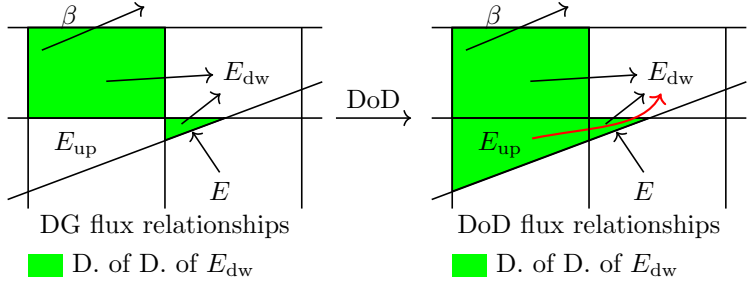
\begin{figure}
    \centering
    	\begin{tikzpicture}[color=black,semithick,yscale=0.6, xscale=0.9]
		
		\begin{scope}[shift={(-2.6, 0.0)}]
			% small cell
			\draw[fill=green] (0.0, -0.49) -- (0.0, 0.0) -- (0.875, 0.0) --cycle;

			% E_up
			\draw[fill=green](-2,0) -- (-2, -1.62) -- (0, -0.49)--  (0.0, 0.0) --cycle;
			
			% bigger neighbor
			\draw[fill=green](-2,0) -- (-2,  2) -- (0, 2)--  (0.0, 0.0) --cycle;
			
			% legend
			\fill[fill=green](-2,-3) -- (-2,  -3.5) -- (-1.5, -3.5)--  (-1.5,  -3) --cycle;
			\node at (-1.5, -3.25) [right]{D. of D. of $E_{\text{dw}}$};
			% horizontal lines
			\draw(-2.3, 2) -- (2.3, 2);
			\draw(-2.3, 0) -- (2.3, 0);
			
			% cut line
			\draw(-2.3, -1.8) -- (2.3, 0.8);
			
			\draw(-0,0) -- (-2.0, 0) -- (-2.0, 2.0) -- (0.0, 2.0) -- cycle;
			
			\draw (1.2, 1) node (2){$E_{\text{dw}}$};
			\draw (-1.3, -0.6) node (1){$E_{\text{up}}$};
			\draw[->] (0.8, -1.2) node[anchor=north, color=black] {$E$} -- (0.4, -0.3);
			
			\node at (-1.0, 0.8) (3) {};
			\node at (0.1, -0.3) (4) {};
			
			\draw[->, red, thick] (1) to [out=15, in=250] (2);
			\draw[->] (3) to (2);
			\draw[->] (4) to (2);
			
			% beta
			\draw[->] (-1.8, 1.5) -- (-1.4, 1.75) node[anchor=south]{$\beta$}  -- (-0.3, 2.46);

			% vertical lines
			\draw(-2, 2) -- (-2, -2);
			\draw(0, 2) -- (0, -2);
			\draw(2, 2) -- (2, -2);
			
			\node at (-0.0, -2.4) {DoD flux relationships};
			
		\end{scope}
		
		\draw[->] (-6.3, 0.0) -- (-5.75, 0.0) node[anchor=south]{DoD}  --  (-5.2, 0.0);

		\begin{scope}[shift={(-8.8, 0.0)}]
			% horizontal lines
			\draw[fill=green] (0.0, -0.49) -- (0.0, 0.0) -- (0.875, 0.0) --cycle;
			\draw(-2.3, 2) -- (2.3, 2);
			\draw(-2.3, 0) -- (2.3, 0);
			
			\draw (1.2, 1) node (6) {$E_{\text{dw}}$};
			\draw (-1.3, -0.6) node (5) {$E_{\text{up}}$};
			\draw[->] (0.8, -1.2) node[anchor=north, color=black] {$E$} -- (0.4, -0.3);
			
						% bigger neighbor
			\draw[fill=green](-2,0) -- (-2,  2) -- (0, 2)--  (0.0, 0.0) --cycle;
			
						% legend
			\fill[fill=green](-2,-3) -- (-2,  -3.5) -- (-1.5, -3.5)--  (-1.5,  -3) --cycle;
			\node at (-1.5, -3.25) [right]{D. of D. of $E_{\text{dw}}$};
			
			\node at (-1.0, 0.8) (7) {};
			\node at (0.1, -0.3) (8) {};
			
			\draw[->] (7) to (6);
			\draw[->] (8) to (6);
			
			% cut line
			\draw(-2.3, -1.8) -- (2.3, 0.8);
			
			\draw(-0,0) -- (-2.0, 0) -- (-2.0, 2.0) -- (0.0, 2.0) -- cycle;
			
			% beta
			\draw[->] (-1.8, 1.5) -- (-1.4, 1.75) node[anchor=south]{$\beta$}  -- (-0.3, 2.46);
			
			% vertical lines
			\draw(-2, 2) -- (-2, -2);
			\draw(0, 2) -- (0, -2);
			\draw(2, 2) -- (2, -2);
			
			\node at (-0.0, -2.4) {DG flux relationships};
		\end{scope}
		
	\end{tikzpicture}
    \caption{Extension of the domain of dependence of a small cell $E$ via our stabilization method displayed for the linear transport equation with a transport vector $\beta$ which is parallel to the cut face. On the left we see the domain of dependence of the cell $E_{\text{dw}}$ when the base DG scheme~\eqref{eq: base scheme} is applied. On the right we see the domain of the dependence of the same element in the mesh when our stabilized scheme~\eqref{eq: stabilized scheme} is applied. Now the cell $E_{\text{dw}}$ will receive information from $E_{\text{up}}$ in a single time step/stage even when a large time step size is used.}
    \label{fig: domain of dependence}
\end{figure}

In order to stabilize reflecting boundaries in small cut cells we need to generalize the above definition to reflecting faces.
\begin{definition}[Reflected extension operator]
Let $w_h \in \dspace$ be a discrete function, $E \in \mathcal{M}_h$ an element and $\gamma \in \GammaExt$ a boundary face on which a reflecting wall boundary condition is applied.
We define the extension operator over this face $\gamma$ as
\begin{equation}
\label{eq:mirroring-extension-op}
\mathcal{L}_{E}^{\gamma}: \dspace \to P^r(\Omega)^3,
\quad w_h \mapsto \mathcal{L}_{E}^{\gamma}(w_h) = \mathfrak{M}_{n_{\gamma}}(\extop{E}(w_h)).
\end{equation}
\end{definition}
To unify the application of the different extension operators, we introduce additional notation. Let $E \in \cI$ be a small cut cell with $\gamma_i \in \Gamma_h(E)$ and $\gamma_j \in \Gamma_h(E)$ being two of its faces, $i, j \in \IE$. Then we denote by
\begin{equation} \label{eq: unified extension operator}
\extop{\mathcal{E}}^{ij} =
\begin{cases}
\extop{\mathcal{E}}
&\quad\text{if }
\mathcal{E} = E \text{ or } \mathcal{E} = E_k, k \in \{i, j\} \text{ and } \gamma_k \in \GammaInt\\
\extop{E_j}^{\gamma_i}
&\quad\text{if } \mathcal{E} = E_i \text{ and }
\gamma_i \not \in \GammaInt\\
\extop{E_i}^{\gamma_j}
&\quad\text{if } \mathcal{E} = E_j \text{ and }
 \gamma_j \not \in \GammaInt
\end{cases}
\end{equation}
the extension operator between $\gamma_i$ and $\gamma_j$ that selects either of the previously defined extension operators based on whether we are extending from an internal or external boundary. Note that we abuse notation here by writing by writing $\mathcal{E} = E_i$ even if $\gamma_i \not \in \GammaInt$ for $i \in \IE$. Note also that we have suppressed the dependence on $E$ here for brevity.

In figure~\ref{fig: domain of dependence} we displayed an example for the linear transport equation with a flux direction parallel to the cut interface. In this case only interactions between two different faces have to be considered, due to a no-flow condition on the cut cell, see also \cite{DoD_2d_linadv_2020}. But here we are now considering the linear wave equation. The behavior of its characteristics is very different and resembles a cone-like structure in space-time. Then interactions between all faces with their respective neighbor elements have to be taken into account to properly extend the domain of dependence. To do so, we introduce the following concept:

\begin{definition}[Propagation forms] \label{def: propagation forms}
Let $E \in \cI$ be a small cell. We introduce a set of surface and volume propagation forms as functionals
\begin{align*}
  p_{ij}^E(u^{\mu}, u^{\nu}, w) & : (P^r(E))^3 \times (P^r(E))^3 \times (P^r(E))^3 \to \mathbb{R}
\intertext{and}
  p_V^E(u^{\mu}, u^{\nu}, w) & : (P^r(E))^3 \times (P^r(E))^3 \times (P^r(E))^3 \to \mathbb{R}\\
  p_V^{E, *}(u^{\mu}, u^{\nu}, w) & : (P^r(E))^3 \times (P^r(E))^3 \times (P^r(E))^3 \to \mathbb{R},
\end{align*}
where $p_V^{E, *}$ should mimic a kind of an adjoint operator to $p_V^{E}$. These forms are required to be symmetric in the first two arguments and linear in the third. Furthermore they have to satisfy for all vector-valued polynomial functions $u^{\mu}, u^{\nu}, w \in \mathcal{P}^r(E)^3$ the following properties:
\begin{enumerate}[i),labelindent=0pt,listparindent=0pt,align=left,leftmargin=0pt,labelwidth=0pt,itemindent=!]
\begin{subequations}
    \item Let $i, j \in \mathbb{I}(E)$. Then it holds
    \begin{equation} \label{eq: propagation balance}
        p_{ij}^E(u^{\mu}, u^{\nu}, w) + p_{ji}^E(u^{\mu}, u^{\nu}, w) = p_V^E(u^{\mu}, u^{\nu}, w) + p_V^{E, *}(u^{\mu}, u^{\nu}, w).
    \end{equation}
    \item \label{lem: propagation forms consistency} Let $j \in \mathbb{I}(E)$. Then it holds
    \begin{equation} \label{eq: propagation forms consistency}
        \sum_{i \in \mathbb{I}(E), i \neq j} p_{ij}^E(u^{\mu}, u^{\nu}, w) = b_{\gamma_j}(u^{\mu}, u^{\nu}, w)
    \end{equation}
\end{subequations}
with $b_{\gamma_j}$ from \eqref{eq: surface forms}.
\end{enumerate}
\end{definition}

\begin{remark}
Equation~\eqref{eq: propagation balance} can be seen as a variation of the integration by parts property, compare with~\eqref{eq: surface forms int by parts} which has a similar structure, i.e. a balance between surface and volume integral terms. Equation~\eqref{eq: propagation forms consistency} is essentially a consistency property. The idea is that all fluxes coming from neighbor elements and going into a particular cell will add up to the standard DG flux from the base scheme. Similar ideas appeared already in \cite{fvca_proc_dod} but in a more concrete shape based on products of flux matrices. The additional freedom that we gain from working with abstract forms is a key step in constructing a high-order method at small boundary cells.
\end{remark}

The propagation forms are the key concept that enables us to extend the DoD method to the wave equation and to prove energy conservation, however we have only given an abstract definition. The properties~\eqref{eq: propagation balance} and~\eqref{eq: propagation forms consistency} define a linear system which is underdetermined. A particular example extending the concept of a central flux is the following:
\begin{example}[Central propagation forms] \label{exam: central propagation forms}
Let $E \in  \cI$ and $i, j \in \mathbb{I}(E)$. Let $K = |\mathbb{I}(E)|$ be the number of faces of $E$. The central (surface) propagation form on $E$ from $E_i$ to $E_j$ is given by
  \begin{equation} \label{eq: central surface propagation form}
    p^{E, Z}_{ij}(u^{\mu}, u^{\nu}, w) = \tfrac{1}{K-1} b_{\gamma_j}(u^{\mu}, u^{\nu}, w) - \tfrac{K-2}{K(K-1)}  b_{\gamma_i}(u^{\mu}, u^{\nu}, w) + \tfrac{1}{K(K-1)} \sum_{\substack{k \in \mathbb{I}(E)\\k \neq i,j}} b_{\gamma_k}(u^{\mu}, u^{\nu}, w).
\end{equation}
The central volume propagation forms on $E$ are given by
\begin{equation} \label{eq: central volume propagation form}
\begin{split}
       p_V^{E, Z}(u^{\mu}, u^{\nu}, w) & = \tfrac{2}{K(K-1)} \int_E  \tfrac{1}{2}\big[f_k(u^{\mu}) + f_k(u^{\nu}) \big] \cdot \nabla w, \\ p_V^{E, Z, *}(w^{\mu}, w^{\nu}, u) & = \tfrac{2}{K(K-1)} \int_E \nabla \cdot \tfrac{1}{2} \big[f(w^{\mu}) + f(w^{\nu}) \big] u.
\end{split}
\end{equation}
These constitute a set of surface and volume propagation forms on $E$, i.e. they fulfill the properties~\eqref{eq: propagation balance} and~\eqref{eq: propagation forms consistency}:
    \begin{enumerate}[i),labelindent=0pt,listparindent=0pt,align=left,leftmargin=0pt,labelwidth=0pt,itemindent=!]
        \item We have
        \begin{align*}
        p_{ij}^{E, Z}(u^{\mu}, u^{\nu}, w) + p_{ji}^{E, Z}(u^{\mu}, u^{\nu}, w) & = \tfrac{2}{K(K-1)} \biggl( b_{\gamma_i}(u^{\mu}, u^{\nu}, w) +  b_{\gamma_j}(u^{\mu}, u^{\nu}, w) + \sum_{\substack{k \in \mathbb{I}(E)\\k \neq i,j}} b_{\gamma_k}(u^{\mu}, u^{\nu}, w) \biggr)\\
        & = p_{V}^{E, Z}(u^{\mu}, u^{\nu}, w) + p_{V}^{E, Z, *}(u^{\mu}, u^{\nu}, w)
        \end{align*}
        where the last equation follows from~\eqref{eq: surface forms int by parts} and~\eqref{eq: central volume propagation form}. This establishes~\eqref{eq: propagation balance}.
        \item By definition and using $K = \IE$ we get
        \begin{align*}
        \sum_{i \in \mathbb{I}(E), i \neq j} p_{ij}^{E, Z}(u^{\mu}, u^{\nu}, w) & \overset{\mathclap{\eqref{eq: central surface propagation form}}}{=} \phantomplus \sum_{i \in \mathbb{I}(E), i \neq j} \tfrac{1}{K-1} b_{\gamma_j}(u^{\mu}, u^{\nu}, w) - \sum_{i \in \mathbb{I}(E), i \neq j} \tfrac{K-2}{K(K-1)} b_{\gamma_i}(u^\mu, u^\nu, w)\\
        & \phantomeq \binplus \sum_{i \in \mathbb{I}(E), i \neq j} \tfrac{1}{K(K-1)} \sum_{k \in \mathbb{I}(E),k \neq i,j} b_{\gamma_k}(u^\mu, u^\nu, w)\\
        & = \phantomplus b_{\gamma_j}(u^{\mu}, u^{\nu}, w) - \sum_{i \in \mathbb{I}(E), i \neq j} \tfrac{K-2}{K(K-1)} b_{\gamma_i}(u^\mu, u^\nu, w)\\
        & \phantomeq \binplus \sum_{i \in \mathbb{I}(E), i \neq j} \tfrac{1}{K(K-1)} \sum_{k \in \mathbb{I}(E),k \neq i,j} b_{\gamma_k}(u^{\mu}, u^{\nu}, w)\\
        & = b_{\gamma_j}(u^\mu, u^\nu, w). 
        \end{align*}
        For the last step note that any $b_{\gamma_i}(u^\mu, u^\nu, w)$, $i \in \IE, i \neq j$, is exactly $K-2$ times contained  in the sum
        \[
        \sum_{i \in \mathbb{I}(E), i \neq j} \tfrac{1}{K(K-1)} \sum_{k \in \mathbb{I}(E),k \neq i,j} b_{\gamma_k}(u^{\mu}, u^{\nu}, w).
        \]
        Thus \eqref{eq: propagation forms consistency} does hold as well.
    \end{enumerate}
\end{example}

The propagation forms allow us to define in what direction information flows but they do not specify the precise amount. For this we introduce the capacity of a small cell and its stabilization parameter:

\begin{definition}[Cell capacity]
Let $E \in \cI$ be a small cell and $r$ be the polynomial degree used in the definition of $\dspace$. The capacity of $E$ is defined as
\begin{equation}
\capE(\Delta t) = \frac{1}{2 r + 1}\frac{|E|}{ \Delta t c \max_{i \in \mathbb I(E)} |\gamma_i|}
\end{equation}
and its stabilization parameter as $\eta_E = \eta_E(\Delta t) = 1 - \capE$.
\end{definition}
Now we are ready to give the definitions of $J_h^{0,E}$, $J_h^{1,E}$ and $J^{s, E}_h$ in~\eqref{eq: cell wise stabilization} to complete the definition of our stabilized scheme~\eqref{eq: stabilized scheme}. The stabilization of a small cut-cell $E \in \cI$ is based on a direct coupling of neighboring cells, effectively tunneling information through the small cut cell.
\begin{definition}[Cell stabilization terms] {\label{def: cell stabilization terms}}
Let $E \in \cI$ be a small cut cell. The cell-wise stabilization terms $J_{h}^{0,E}$, $J_{h}^{1,E}$ and $J_{h}^{s,E}$ are constructed from pairwise (for each pair of neighbors $i, j \in \IE, i \neq j$) contributions as follows
\begin{subequations} \label{eq: dod cell terms}
\begin{align}
\label{eq: dod cell term a0}
    \begin{split}
    J^{0, E}_h(u_h, w_h) & = \phantomminus \eta_E \sum_{(i, j) \in \IE^2, \, i < j} J^{0, E}_{h, ij} (u_h, w_h)\\
    & \phantomeq \binminus \eta_E \sum_{\gamma \in \GammaInt(E)} \int_{\gamma} \langle \tfrac{1}{2}[f_n(u^L_{\gamma}) + f_n(u^R_{\gamma})], \llbracket w_h \rrbracket \rangle\\
    & \phantomeq \binminus \eta_E \sum_{\gamma \in \GammaExt(E)} \int_{\gamma} \langle \tfrac{1}{2}[f_n(u_h) + f_n(\mathfrak{M}_n(u_h)], w_h \rangle,
    \end{split}\\
\label{eq: dod cell term a1}
    J^{1, E}_h(u_h, w_h) & = \phantomminus \eta_E \sum_{(i, j) \in \IE^2, \, i < j} J^{1, E}_{h, ij}(u_h, w_h),\\
\label{eq: dod cell term s}
    \begin{split}
    J^{s, E}_h(u_h, w_h) & = \phantomminus \eta_E \sum_{(i, j) \in \IE^2, \, i < j} J^{s, E}_{ij}(u_h, w_h)\\
    & \phantomeq \binminus \eta_E \sum_{\gamma \in \GammaInt(E)} \int_{\gamma} \langle S_n(u^L_{\gamma}, u^R_{\gamma}), \llbracket w_h \rrbracket \rangle\\
    & \phantomeq \binminus \eta_E \sum_{\gamma \in \GammaExt(E)} \int_{\gamma} \langle S_n(u_h, \mathfrak{M}_n(u_h), w_h \rangle.
    \end{split}
\end{align}
\end{subequations}

For the surface contributions $J_{h,ij}^{0,E}$ of $J_{h}^{0,E}$
we distinguish two cases: (1) two inner faces, i.e. $\GammaInt \in \GammaInt$ and $\gamma_j \in \GammaInt$, and (2) one face being a reflecting boundary face.

\begin{enumerate}[wide]
\item For two inner faces $\gamma_i, \gamma_j \in \GammaInt$ are associated with neighbor elements $E_i$ and $E_j$, the surface stabilization term between $E_i$ and $E_j$ is given by
\begin{equation}
\begin{split}
       J_{h,ij}^{0,E}(u_h, w_h) & = \phantomplus p_{ij}^E(\extop{E_i}(u_h), \extop{E_j}(u_h), \extop{E}(w_h) - \extop{E_j}(w_h))\\
& \phantomeq \binplus p_{ji}^E(\extop{E_i}(u_h), \extop{E_j}(u_h), \extop{E}(w_h) - \extop{E_i}(w_h)). 
\end{split}
\end{equation}

\item If one of the two face is a reflecting boundary face, the stabilization needs to be defined separately. W.l.o.g we assume that $\gamma_i \in \GammaExt$ and $\gamma_j \in \GammaInt$. The reflected surface stabilization term between $\gamma_i$ and $\gamma_j$ is then given by
\begin{equation}
\begin{split}
       J_{h,ij}^{0,E}(u_h, w_h) & = \phantomplus p_{ij}^E(\extop{E_j}^{\gamma_i}(u_h), \extop{E_j}(u_h), \extop{E}(w_h) - \extop{E_j}(w_h))\\
& \phantomeq \binplus p_{ji}^E(\extop{E_j}^{\gamma_i}(u_h), \extop{E_j}(u_h), \extop{E}(w_h)). 
\end{split}
\end{equation}
\end{enumerate}

To define the contribution $J_{h,ij}^{1,E}$ we introduce a weighting factor $\omega_{\mathcal{E}} := \{ -1 \text{ if } \mathcal{E} = E; \frac{1}{2} \text{ else}\}$. With this the volume stabilization is given as
\begin{equation} \label{eq: dod volume term ij}
\begin{split}
       J_{h,ij}^{1,E}(u_h, w_h) & = \phantomplus \sum_{\mathcal{E} \in \{E, E_i, E_j\}} \omega_{\mathcal{E}} \big[p_V^E (\extop{E_i}^{ij}(u_h), \extop{E_j}^{ij}(u_h), \extop{\mathcal{E}}^{ij}(w_h))\\
       & \phantomeq \phantomplus \hphantom{\sum_{\mathcal{E} \in \{E, E_i, E_j\}} \omega_{\mathcal{E}} \big[]}- \tfrac{2}{K(K-1)} \int_{E} \sum_{k=1}^d \langle f_k (\extop{\mathcal{E}}^{ij}(u_h)), \partial_k \extop{\mathcal{E}}^{ij}(w_h) \rangle \big]\\
& \phantomeq \binplus \: \sum_{\mathcal{E} \in \{E, E_i, E_j\}} \omega_{\mathcal{E}} p_V^{E,*} (\extop{E_i}^{ij}(w_h), \extop{E_j}^{ij}(w_h), \extop{\mathcal{E}}^{ij}(u_h)).
\end{split}
\end{equation}

Finally the contribution $J^{s, E}_{ij}$ to the dissipative stabilization is
\begin{equation} \label{eq: dod neighbor dissipation}
\begin{split}
J^{s, E}_{ij}(u_h, w_h) & = \phantomplus \tfrac{1}{6} \sum_{\gamma \in \Gamma_h(E) } \int_{\gamma} \langle S_n(\extop{E_i}^{ij}(u_h),\extop{E_j}^{ij}(u_h)), \extop{E_i}^{ij}(w_h) - \extop{E_j}^{ij}(w_h)) \rangle\\
& \phantomeq \binplus \tfrac{1}{6} \sum_{\gamma \in \Gamma_h(E)} \int_{\gamma} \langle S_n(\extop{E_j}^{ij}(u_h),\extop{E_i}^{ij}(u_h)), \extop{E_j}^{ij}(w_h) - \extop{E_i}^{ij}(w_h)) \rangle.
\end{split}
\end{equation}

\end{definition}
\begin{remark}[Structure of the stabilization terms]
We want to make a few remarks on the structure of the stabilization terms and comparisons with prior formulations.
\begin{enumerate}[wide]
\item 
The structure of the volume stabilization term~\eqref{eq: dod volume term ij} and that of $J^1_h$ in \cite{DoD_1d_nonlin_2022} are essentially identical, with two important differences. The obvious one is the use of the volume propagation forms and the weighting in front of the integral term (which already appeared in the central volume propagation forms). 
This is necessary to properly consider the relationships between multiple neighbor elements of the stabilized cut cell.
The second difference is
that we only use the central flux in~\eqref{eq: dod volume term ij}, respectively in our volume propagation forms, while in \cite{DoD_1d_nonlin_2022} the term $J^1_j$ contained arbitrary numerical fluxes that in general could contain dissipative fluxes.

Having no dissipative fluxes in the volume term has several important benefits.
First, it allows us to construct an energy-conservative formulation of the DoD method (or even a formulation with the SBP property \cite{petri2026kinetic}).
Secondly, in our experience the construction of a consistent extension of dissipative surface fluxes into the volume of the cut cell is very delicate, while adding dissipation purely via surface terms, as in~\eqref{eq: dod neighbor dissipation}, is straightforward.
Hence, the splitting between central and dissipative fluxes allows for a much simpler extension of our method to multiple space dimensions.
\item
The inclusion of a separate dissipation term like~\eqref{eq: dod neighbor dissipation} depending on extended polynomial functions was already done in \cite{fvca_proc_dod} where the amount of dissipation was based on eigendecompositions of the occuring operators. Here the construction is much simpler, due to the separation of central and dissipative terms.
\item
In~\eqref{eq: dod cell terms} the sums go over all index pairs $(i, j) \in \IE$ such that $i < j$ but since $J^{0, E}_{ij}$ contains the propagation form for both index pairs $(i, j)$ and $(j, i)$ we have an extended flux in both directions. What we do not have, though, is a flux for the index pair $(i, i)$, that is there is no recurring flux in the formulation. 
\item 
Even though the neighbor dissipation term~\eqref{eq: dod neighbor dissipation} is dissipative, the term~\eqref{eq: dod cell term s} is not. When proving energy stability we must consider $s_h + J^s_h$, which is indeed dissipative, as we will prove in section~\ref{sec: theoretical section}.
\end{enumerate}
\end{remark}

\section{Energy conservation and stability} \label{sec: theoretical section}

To prove statements about energy preservation, we need two additional properties. They are related to the concept of flux potential and entropy-conservative fluxes in the theory of entropy-conservative/stable schemes developed in \cite{TADMOR1987,Tadmor_2003}.

\begin{definition}[Energy-preserving propagation forms] \label{def: energy preserving propagation forms}
Let $E \in \cI$ be a small cut cell and consider a set of propagation forms consisting of $p_{ij}^E$ with $i, j \in \IE$, $i \neq j$, $p_V^E$ and $p_V^{E, *}$. We call this set of propagation forms energy preserving if it satisfies the following properties:
\begin{enumerate}
    \item \label{item: flux potential} Let $K = |\IE|$ be the number of faces of $E$. Then for all $w_h \in \mathcal{P}^r(E)^3$ it must hold that
    \begin{equation}
    \frac{2}{K(K-1)} \int_{E} f (w_h) \cdot \nabla w_h = \frac{1}{2}p_{ij}^E(w_h, w_h, w_h) + \frac{1}{2}p_{ji}^E(w_h, w_h, w_h).
    \end{equation}
    \item \label{item: energy conservative flux} Let $u^\mu, u^\nu \in \mathcal{P}^r(E)^3$. Then it must hold that
    \begin{equation}
p_{ij}^E(u^\mu, u^\nu, u^\mu - u^\nu) = \frac{1}{2}p_{ij}^E(u^\mu, u^\mu, u^\mu) - \frac{1}{2}p_{ij}^E(u^\nu, u^\nu, u^\nu).
\end{equation}
\end{enumerate}
\end{definition}
As an example we will show that the central propagation forms that we defined in~\ref{exam: central propagation forms} are energy preserving:

\begin{proposition} \label{prop: energy preservation central propagation forms}
Let $E \in \cI$ be a small cut cell. The set of central propagation forms on $E$ defined in example~\ref{exam: central propagation forms} is energy preserving.
\end{proposition}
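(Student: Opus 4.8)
The plan is to verify the two defining properties of Definition~\ref{def: energy preserving propagation forms} for the central forms $p^{E,Z}_{ij}$, $p^{E,Z}_V$ and $p^{E,Z,*}_V$ separately. Throughout, the only structural facts I will use are the linearity of the directional flux, $f_n(u) = A_n u$, and the symmetry of the matrices $A_n$ (hence of $A_1$, $A_2$), which is exactly the symmetric-hyperbolic structure that makes the wave equation energy conserving.

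For the flux-potential property~(\ref{item: flux potential}), I would start from the propagation balance~\eqref{eq: propagation balance}, which was already established for the central forms in Example~\ref{exam: central propagation forms}, and evaluate it at $u^\mu = u^\nu = w = w_h$. This immediately reduces $\tfrac12 p^{E,Z}_{ij}(w_h,w_h,w_h) + \tfrac12 p^{E,Z}_{ji}(w_h,w_h,w_h)$ to $\tfrac12\big[p_V^{E,Z}(w_h,w_h,w_h) + p_V^{E,Z,*}(w_h,w_h,w_h)\big]$, so the surface forms drop out entirely. Substituting the definitions~\eqref{eq: central volume propagation form} leaves the two volume integrals $\tfrac{2}{K(K-1)}\int_E f(w_h)\cdot\nabla w_h$ and $\tfrac{2}{K(K-1)}\int_E \langle\nabla\cdot f(w_h), w_h\rangle$. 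The key step is then the pointwise identity $f(w_h)\cdot\nabla w_h = \langle\nabla\cdot f(w_h), w_h\rangle$: writing the two sides as $\sum_k \langle A_k w_h, \partial_k w_h\rangle$ and $\sum_k\langle A_k \partial_k w_h, w_h\rangle$ (using that the $A_k$ are constant, so no extra derivative terms arise), they coincide because each $A_k$ is symmetric. Hence the two volume contributions are equal, their half-sum equals $\tfrac{2}{K(K-1)}\int_E f(w_h)\cdot\nabla w_h$, and the prefactor $\tfrac12$ matches exactly.

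For the energy-conservative property~(\ref{item: energy conservative flux}), I would exploit that $p^{E,Z}_{ij}$ is, by~\eqref{eq: central surface propagation form}, a fixed linear combination of the surface forms $b_{\gamma_k}$; since the claimed identity is linear in the form, it suffices to prove it for a single $b_\gamma$. Expanding $b_\gamma(u^\mu,u^\nu,u^\mu-u^\nu) = \tfrac12\int_\gamma \langle A_n(u^\mu+u^\nu), u^\mu - u^\nu\rangle$ and using the symmetry $\langle A_n u^\mu, u^\nu\rangle = \langle A_n u^\nu, u^\mu\rangle$, the two mixed terms cancel and one is left with $\tfrac12\int_\gamma\big[\langle A_n u^\mu, u^\mu\rangle - \langle A_n u^\nu, u^\nu\rangle\big]$. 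On the other hand $b_\gamma(u^\mu,u^\mu,u^\mu) = \int_\gamma\langle A_n u^\mu, u^\mu\rangle$, so $\tfrac12 b_\gamma(u^\mu,u^\mu,u^\mu) - \tfrac12 b_\gamma(u^\nu,u^\nu,u^\nu)$ yields the same expression, establishing the identity for $b_\gamma$ and hence, by linearity, for $p^{E,Z}_{ij}$.

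In both parts the genuine content is the symmetry of the flux matrices; everything else is linearity and bookkeeping. I expect the only mild technical point to be the pointwise identity in the flux-potential step, where one must use that the $A_k$ are constant so that comparing $f(w_h)\cdot\nabla w_h$ with $\langle\nabla\cdot f(w_h), w_h\rangle$ produces no spurious terms, and where getting the combinatorial factor $\tfrac{2}{K(K-1)}$ to line up with the $\tfrac12$ in front of the propagation forms is the place to be careful.
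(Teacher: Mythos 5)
Your proposal is correct and follows essentially the same route as the paper's proof: both parts hinge on exactly the two facts the paper uses, namely the propagation balance~\eqref{eq: propagation balance} combined with the pointwise identity $f(w_h)\cdot\nabla w_h = \langle \nabla\cdot f(w_h), w_h\rangle$ (symmetry and constancy of the $A_k$) for property~(\ref{item: flux potential}), and the symmetry of $A_n$ in the surface forms for property~(\ref{item: energy conservative flux}). The only differences are cosmetic: you run the first argument from the balance toward the volume integral rather than the reverse, and in the second part you reduce to a single $b_\gamma$ by linearity where the paper instead records the splitting and symmetry identities \eqref{eq: central propagation form splitting}--\eqref{eq: central propagation form symmetry} for $p^{E,Z}_{ij}$ and manipulates those.
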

\begin{proof}
\begin{enumerate}
    \item Let $w_h \in \mathcal{P}^r(E)^3$. We have
\begin{align*}
\int_{E} f (w_h) \cdot \nabla w_h & = \tfrac{1}{2}\int_{E} \sum_{k=1}^d \langle f_k (w_h), \partial_k w_h \rangle + \tfrac{1}{2} \int_{E} \sum_{k=1}^d \langle f_k (w_h), \partial_k w_h \rangle\\
& = \tfrac{1}{2}\int_{E} \sum_{k=1}^d \langle f_k (w_h), \partial_k w_h \rangle + \tfrac{1}{2} \int_{E} \sum_{k=1}^d \langle w_h, f_k(\partial_k w_h) \rangle\\
& = \tfrac{1}{2}\int_{E} \sum_{k=1}^d \langle f_k (w_h), \partial_k w_h \rangle + \tfrac{1}{2} \int_{E} \sum_{k=1}^d \langle w_h, \partial_k f_k(w_h) \rangle\\
& = \tfrac{1}{2} \int_E f(w_h) \cdot \nabla w_h + \tfrac{1}{2} \int_E [\nabla \cdot f(w_h)] w_h
\end{align*}
where the second equality is due to the symmetry of the linear operator $f_k$ and the third equality is due to $f_k$ being linear and constant in space. Multiplying with $\frac{2}{K(K-1)}$ and using~\eqref{eq: central volume propagation form} and~\eqref{eq: propagation balance} we get
\begin{align*}
\tfrac{2}{K(K-1)} \int_{E} f (w_h) \cdot \nabla w_h & = \tfrac{1}{2}\tfrac{2}{K(K-1)} \bigg[ \phantomplus\int_{E} \tfrac{1}{2}[f(w_h)+f(w_h)] \cdot \nabla w_h\\
& \phantomeq \hphantom{\tfrac{1}{2}\tfrac{2}{K(K-1)} \bigg[} + \int_E \tfrac{1}{2}[\nabla \cdot f(w_h) + \nabla \cdot f(w_h)] w_h \bigg] \\
& = \tfrac{1}{2}\big[ p_V^E(w_h, w_h, w_h) + p_V^{E, *}(w_h, w_h, w_h) \big]\\
& = \tfrac{1}{2} \big[ p_{ij}^E(w_h, w_h, w_h) + p_{ji}^{E}(w_h, w_h, w_h) \big].
\end{align*}
\item The form $p_{ij}^E$ is linear in the third argument. Furthermore by the definition of $p_{ij}^E$ and the central flux as well as the symmetry of $A_n$ (which implies $\langle A_n u^\mu, u^\nu \rangle = \langle u^\mu, A_n u^\nu \rangle)$ we have
\begin{align}
\label{eq: central propagation form splitting}
p_{ij}^E(u^\mu, u^\nu, w) & = \tfrac{1}{2} p_{ij}^E(u^\mu, u^\mu, w) + \tfrac{1}{2} p_{ij}^E(u^\nu, u^\nu, w)\\
\label{eq: central propagation form symmetry}
p_{ij}^E(u^\mu, u^\mu, u^\nu) & = p_{ij}^E(u^\nu, u^\nu, u^\mu)
\end{align}
for all $w \in \mathcal{P}^r(E)^3$ and all $u^\mu, u^\nu \in \mathcal{P}^r(E)^3$. Thus
\begin{align*}
p_{ij}^E(u^\mu, u^\nu, u^\mu - u^\nu) & = p_{ij}^E(u^\mu, u^\nu, u^\mu) - p_{ij}^E(u^\mu, u^\nu, u^\nu)\\
& \overset{\mathclap{\eqref{eq: central propagation form splitting}}}{=} \tfrac{1}{2}\big[p_{ij}^E(u^\mu, u^\mu, u^\mu) + p_{ij}^E(u^\nu, u^\nu, u^\mu)\big] - \tfrac{1}{2}\big[p_{ij}^E(u^\mu, u^\mu, u^\nu) + p_{ij}^E(u^\nu, u^\nu, u^\nu) \big]\\
& \overset{\mathclap{\eqref{eq: central propagation form symmetry}}}{=} \tfrac{1}{2}p_{ij}^E(u^\mu, u^\mu, u^\mu) - \tfrac{1}{2}p_{ij}^E(u^\nu, u^\nu, u^\nu).
\end{align*}
\end{enumerate}
\end{proof}
At the domain boundary additional complications arise and we need to take into account the reflecting wall boundary condition. For the base scheme~\eqref{eq: base scheme} energy conservation follows from the skew-symmetry of the reflecting wall boundary condition~\eqref{eq: boundary reflection skew symmetry} and we need to pose a similar condition for our propagation forms:
\begin{definition}[Reflecting propagation forms] \label{def: reflected propagation form}
Let $E \in \cI$ be a small cut cell and consider a set of propagation forms consisting of $p_{ij}^E$ with $i, j \in \IE, i \neq j$, $p_V^E$, and $p_V^{E, *}$. We call this set of propagation forms a set of reflecting propagation forms if for any $i \in \IE$, such that $\gamma_i \in \GammaExt(E)$ is a reflecting wall boundary face, there holds 
\begin{equation} \label{eq: reflected propagation form}
p_{ji}^E(u^\mu, \extop{E_j}^{\gamma_i}(u^\nu), w) = -p_{ji}^E(\extop{E_j}^{\gamma_i}(u^\mu), u^\nu, \extop{E_j}^{\gamma_i}(w)), \quad \forall j \in \IE, \; j \neq i,
\end{equation}
and all $u^\mu, u^\nu, w \in \mathcal{P}^r(E)^3$.
\end{definition}
Again this is only an abstract definition. Given assumption~\ref{item: reflecting wall face assumption} we can construct a set of reflecting propagation forms which essentially amounts to adding an element of the kernel of the linear system posed by~\eqref{eq: surface forms int by parts} and~\eqref{eq: propagation forms consistency} to the central propagation forms as follows:
\begin{example}[Reflecting central propagation forms] \label{exam: reflecting propagation forms}
Let $E \in \IE$ be a small cut cell and $\gamma_i \in \GammaExt(E)$, with $i \in \IE$ its index, a boundary face of $E$ with a reflecting wall boundary condition. Consider the forms
\[
\tilde{p}_{jk}^E(u^{\mu}, u^{\nu}, w) : (P^r(E))^3 \times (P^r(E))^3 \times (P^r(E))^3 \to \mathbb{R}
\]
where $j, k \in \IE$ and $j \neq k$ defined by
\begin{align*}
\tilde{p}_{ij}^E(u^{\mu}, u^{\nu}, w) & = - \tfrac{K-2}{K(K-1)} b_{\gamma_j}(u^\mu, u^\nu, w) + \tfrac{1}{K(K-1)} \sum_{k \in \IE, k \neq i, j} b_{\gamma_k}(u^\mu, u^\nu, w)\\
\tilde{p}_{ji}^E(u^{\mu}, u^{\nu}, w) & = \hphantom{-} \tfrac{K-2}{K(K-1)} b_{\gamma_j}(u^\mu, u^\nu, w) - \tfrac{1}{K(K-1)} \sum_{k \in \IE, k \neq i, j} b_{\gamma_k}(u^\mu, u^\nu, w)
\end{align*}
for $i \neq j \in \IE$ and
\begin{align*}
\tilde{p}_{jk}^E(u^{\mu}, u^{\nu}, w) & = \tfrac{1}{K(K-1)} b_{\gamma_k}(u^\mu, u^\nu, w) - \tfrac{1}{K(K-1)} b_{\gamma_j}(u^\mu, u^\nu, w)\\
\tilde{p}_{kj}^E(u^{\mu}, u^{\nu}, w) & = \tfrac{1}{K(K-1)} b_{\gamma_j}(u^\mu, u^\nu, w) - \tfrac{1}{K(K-1)} b_{\gamma_k}(u^\mu, u^\nu, w)
\end{align*}
for $j, k \in \IE$ and $j \neq k, k \neq i, j \neq i$. We want to show that the forms
\begin{equation} \label{eq: reflected central propagation forms}
p_{ij}^{E, \mathfrak{M}} = p_{ij}^{E, Z} + \tilde{p}_{ij}, i, j \in \IE, i \neq j, \quad p_V^{E, Z} \text{ and } p_V^{E, Z, *}
\end{equation}
constitute a set of propagation forms. First, we note that
\[
\tilde{p}^E_{jk} + \tilde{p}^E_{kj} = 0 \quad \forall j, k \in \IE, j \neq k,
\]
even if $i = j$ or $i = k$.
Furthermore, we have
\begin{align*}
\sum_{j \in \IE, j \neq i} \tilde{p}^E_{ji} & = \tfrac{K-2}{K(K-1)} \sum_{j \in \IE, j \neq i} b_{\gamma_j}(u^\mu, u^\nu, w) - \tfrac{1}{K(K-1)} \sum_{j \in \IE, j \neq i} \sum_{k \in \IE, k \neq i, j} b_{\gamma_k}(u^\mu, u^\nu, w)\\
& = \tfrac{K-2}{K(K-1)} \sum_{j \in \IE, j \neq i} b_{\gamma_j}(u^\mu, u^\nu, w) - \tfrac{K-2}{K(K-1)} \sum_{j \in \IE, j \neq i}  b_{\gamma_j}(u^\mu, u^\nu, w)\\
& = 0
\end{align*}
and also, if $i \neq j \in \IE$,
\begin{align*}
    \sum_{k \in \IE, k \neq j} \tilde{p}^E_{kj} & = \hphantom{-} \sum_{k \in \IE, k \neq j, i} \tilde{p}^E_{kj} + \tilde{p}_{ij}^E\\
    & = \hphantom{-} \tfrac{1}{K(K-1)} \sum_{k \in \IE, k \neq j, i} b_{\gamma_j}(u^\mu, u^\nu, w) - \tfrac{1}{K(K-1)} \sum_{k \in \IE, k \neq j, i} b_{\gamma_k}(u^\mu, u^\nu, w)\\
    & \phantomeq - \tfrac{K-2}{K(K-1)} b_{\gamma_j}(u^\mu, u^\nu, w) + \tfrac{1}{K(K-1)} \sum_{k \in \IE, k \neq i, j} b_{\gamma_k}(u^\mu, u^\nu, w)\\
    & = 0.
\end{align*}
From this it follows that the forms~\eqref{eq: reflected central propagation forms} satisfy the properties~\eqref{eq: propagation balance} and~\eqref{eq: propagation forms consistency}. Let us now look at~\eqref{eq: reflected propagation form}. We have by construction
\begin{align*}
p^{E, \mathfrak{M}}_{ji}(u^\mu, \extop{E_j}^{\gamma_i}(u^\nu), w) & = \tfrac{1}{K-1} b_{\gamma_i}(u^\mu, \extop{E_j}^{\gamma_i}(u^\nu), w)\\
& \overset{\eqref{eq: surface forms}}{=} \tfrac{1}{K-1} \int_{\gamma_i} \langle \tfrac{1}{2}\big [f_n(u^\mu) + f_n(\mathfrak{M}_n(u^\nu)) \big], w \rangle\\
& \overset{\eqref{eq: reflecting wall boundary operator}}{=} -\tfrac{1}{K-1} \int_{\gamma_i} \langle \tfrac{1}{2}\big [f_n(\mathfrak{M}_n(u^\mu)) + f_n(u^\nu) \big], \mathfrak{M}_n(w) \rangle\\
& = -p^{E, \mathfrak{M}}_{ji}(\extop{E_j}^{\gamma_i}(u^\mu), u^\nu, \extop{E_j}^{\gamma_i}(w))
\end{align*}
so \eqref{eq: reflected propagation form} is satisfied. 

Finally we note that these reflecting propagation forms are also energy preserving. Indeed, looking again at the proof of proposition~\ref{prop: energy preservation central propagation forms} we note that we only relied on the particular choice of volume propagation forms, which we did not change here and the usage of the central flux, which did not change either. Thus the same proof can be applied to the reflecting propagation forms~\eqref{eq: reflected central propagation forms}, i. e. they are energy preserving.
\end{example}
We will now prove that the central stabilization terms~\eqref{eq: dod cell term a0} and~\eqref{eq: dod cell term a1} conserve energy, under the assumption that the underlying set of propagation forms is energy preserving, i. e. it satisfies definition \ref{def: energy preserving propagation forms} and also definition \ref{def: reflected propagation form} in case a reflecting wall boundary condition is applied:
\begin{proposition}[Energy conservation] \label{prop: l2-conservation}
Let $E \in \cI$ be a small cut cell. Assume that the underlying set of propagation forms of~\eqref{eq: dod cell term a0} and~\eqref{eq: dod cell term a1} is energy preserving and a set of reflected propagation forms. Then it holds that
\begin{equation}
    J^{E, 0}_{h}(u_h, u_h) + J^{E, 1}_{h}(u_h, u_h) = 0,
\end{equation}
in other words, the central stabilization forms conserve the energy.
\end{proposition}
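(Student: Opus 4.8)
The statement is local to one small cut cell $E$, and the prefactor $\eta_E$ is common to every summand of \eqref{eq: dod cell term a0} and \eqref{eq: dod cell term a1}, so I would first divide it out and reduce the claim to showing that the bracketed sum of the pairwise surface forms $J^{0,E}_{h,ij}$, the pairwise volume forms $J^{1,E}_{h,ij}$, and the two subtracted base-scheme flux lines of \eqref{eq: dod cell term a0} vanishes at $(u_h,u_h)$. To control the bookkeeping I abbreviate $u_{\mathcal E}:=\extop{\mathcal E}(u_h)$ (with the reflected extension $\extop{E_j}^{\gamma_i}$ replacing $\extop{E_i}$ at a reflecting face $\gamma_i$); these are all genuine elements of $\mathcal P^r(E)^3$, so every propagation-form identity applies verbatim. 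A first simplification rewrites the surface stabilization $\sum_{i<j}J^{0,E}_{h,ij}(u_h,u_h)$ as one sum over ordered pairs, $\sum_{a\neq b}p^E_{ab}(u_a,u_b,u_E-u_b)$, using the symmetry of $p^E_{ab}$ in its first two arguments.

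Next I would split each summand by linearity in the third slot into a ``$u_E$-part'' and a ``$-u_b$-part''. Pairing $(a,b)$ with $(b,a)$, the $u_E$-parts collapse under the balance property \eqref{eq: propagation balance} into $p^E_V(u_a,u_b,u_E)+p_V^{E,*}(u_a,u_b,u_E)$, and these cancel exactly against the $\omega_E=-1$ contributions of the volume stabilization \eqref{eq: dod volume term ij}. For the surviving $-u_b$-parts together with the $\omega_{E_i}=\omega_{E_j}=\tfrac12$ contributions of the $p^E_V(u_i,u_j,\cdot)+p_V^{E,*}(u_i,u_j,\cdot)$ pieces of \eqref{eq: dod volume term ij}, I would apply \eqref{eq: propagation balance} once more to turn the half-weighted volume forms back into sums of $p^E_{ij}$ and $p^E_{ji}$, at which point everything assembles into terms of the shape $p^E_{ij}(u_i,u_j,u_i-u_j)$. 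Property \ref{item: energy conservative flux} then converts these into differences of the cubic potentials $p^E_{ij}(w,w,w)$, leaving an antisymmetric cubic remainder.

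It remains to show that this remainder and the $\int_E f(u_{\mathcal E})\cdot\nabla u_{\mathcal E}$ volume integrals of \eqref{eq: dod volume term ij} cancel the subtracted base fluxes. Here I would use property \ref{item: flux potential} to express each such integral through the symmetric combination $p^E_{ij}(u_{\mathcal E},u_{\mathcal E},u_{\mathcal E})+p^E_{ji}(u_{\mathcal E},u_{\mathcal E},u_{\mathcal E})$, the consistency property \eqref{eq: propagation forms consistency} to identify $\sum_{a\neq j}p^E_{aj}(w,w,w)$ with the face flux $\int_{\gamma_j}\langle f_n(w),w\rangle$, and the integration-by-parts relation \eqref{eq: surface forms int by parts} together with the symmetry of $A_n$ in the form $\sum_{\gamma\in\Gamma_h(E)}\int_\gamma\langle f_n(w),w\rangle=2\int_E f(w)\cdot\nabla w$. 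After reindexing the double sums (rewriting $\sum_{i\neq j}$ over first versus second index), the antisymmetric cubic contributions and the volume integrals combine with the subtracted internal base-flux line of \eqref{eq: dod cell term a0} to vanish for all internal faces.

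The reflecting boundary faces are, along with this reindexing, the main obstacle. At a wall face $\gamma_i\in\GammaExt(E)$ the ghost state is supplied by $\extop{E_j}^{\gamma_i}=\mathfrak M_{n_{\gamma_i}}\circ\extop{E_j}$, and the subtracted base flux is the weak wall term in the last line of \eqref{eq: dod cell term a0}. The role played internally by the plain symmetry of $A_n$ is now taken over by the skew-symmetry \eqref{eq: boundary reflection skew symmetry} of $\mathfrak M_n$ together with the reflecting-propagation-form identity \eqref{eq: reflected propagation form}, which is engineered precisely so that the boundary surface contributions cancel the weak wall flux. Verifying that these two identities close the boundary case exactly as symmetry and consistency closed the interior case is where the genuine work lies; the rest reduces to routine applications of linearity and the four structural properties of the propagation forms.
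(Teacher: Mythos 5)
Your plan is correct and follows essentially the same route as the paper's proof: the same splitting by linearity in the third argument, cancellation of the $\extop{E}(u_h)$-tested surface parts against the $\omega_E=-1$ volume contributions via the balance property \eqref{eq: propagation balance}, reduction of the surviving pairwise terms to cubic potentials via the flux-potential and energy-conservative-flux properties of Definition~\ref{def: energy preserving propagation forms}, and closure of the boundary case via \eqref{eq: reflected propagation form}, the consistency property \eqref{eq: propagation forms consistency}, and the skew-symmetry \eqref{eq: boundary reflection skew symmetry} --- which is exactly how the paper organizes its terms $T^1_{ij},\dots,T^4_{ij}$ and $T^{V,1}_{ij},T^{V,2}_{ij}$. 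The only caveat is that your uniform ordered-pair rewriting $\sum_{a\neq b}p^E_{ab}(u_a,u_b,u_E-u_b)$ fails verbatim at a reflecting face, where the second propagation form in $J^{0,E}_{h,ij}$ is tested with $\extop{E}(w_h)$ alone rather than $\extop{E}(w_h)-\extop{E_i}(w_h)$; since you defer the boundary to a separate verification this does not break the plan, but that case distinction must already be made at the first rewriting step.
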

\begin{proof}
The key in the proof will be the investigation of the pairwise interaction between two faces $\gamma_i$ and $\gamma_j$ with $i, j \in \IE$. The handling of such face pairs will differ depending on whether one of these faces is a reflecting wall boundary or not. If this is the case, w.\,l.\,o.\,g. we will always assume that $\gamma_i$ is the reflecting face, to ease notation.

Using linearity in the third argument, we begin by splitting the central surface stabilization into
\[
J^{0, E}_{h, ij}(u_h, u_h) = T_{ij}^1 + T_{ij}^2
\]
where
\begin{enumerate}
\item for two internal faces we set
\begin{align*}
T_{ij}^1 & = -p_{ij}^E(\extop{E_i}(u_h), \extop{E_j}(u_h), \extop{E_j}(u_h)) - p_{ji}^E(\extop{E_i}(u_h), \extop{E_j}(u_h), \extop{E_i}(u_h)),\\
T_{ij}^2 & = p_{ij}^E(\extop{E_i}(u_h), \extop{E_j}(u_h), \extop{E}(u_h)) + p_{ji}^E(\extop{E_i}(u_h), \extop{E_j}(u_h), \extop{E}(u_h)),
\end{align*}
\item for $\gamma_i$ a boundary face we set
\begin{align*}
T_{ij}^1 & = -p_{ij}^E(\extop{E_j}^{\gamma_i}(u_h), \extop{E_j}(u_h), \extop{E_j}(u_h)),\\
T_{ij}^2 & = p_{ij}^E(\extop{E_j}^{\gamma_i}(u_h), \extop{E_j}(u_h), \extop{E}(u_h)) + p_{ji}^E(\extop{E_j}^{\gamma_i}(u_h), \extop{E_j}(u_h), \extop{E}(u_h)).
\end{align*}
\end{enumerate}
Now we use part~\ref{lem: propagation forms consistency} of definition~\ref{def: propagation forms} to write
\begin{align*}
J^{0, E}_h(u_h, u_h) & = \phantomminus \eta_E \sum_{(i, j) \in \IE^2, \, i < j} T_{ij}^1 + T_{ij}^2 - \eta_E \sum_{\gamma \in \GammaInt(E)} \int_{\gamma} \langle \tfrac{1}{2}[f_n(u^L_{\gamma}) + f_n(u^R_{\gamma})], \llbracket u_h \rrbracket \rangle\\
& \phantomeq \binminus \eta_E \sum_{\gamma \in \GammaExt(E)} \int_{\gamma} \langle \tfrac{1}{2}[f_n(u_h) + f_n(\mathfrak{M}_n(u_h))], u_h \rangle\\
& = \phantomminus \eta_E \sum_{(i, j) \in \IE^2, \, i < j} T_{ij}^1 + T_{ij}^2 - \eta_E \sum_{\gamma \in \GammaInt(E)} b_{\gamma} (\extop{E^L_\gamma}(u_h), \extop{E^R_\gamma}(u_h), \extop{E^L_\gamma}(u_h) - \extop{E^R_\gamma}(u_h))\\
& \phantomeq \binminus \eta_E \sum_{\gamma \in \GammaExt(E)} b_{\gamma} (\extop{E}(u_h), \extop{E}^\gamma(u_h), \extop{E}(u_h)\\
& = \eta_E \sum_{(i, j) \in \IE^2, \, i < j} T_{ij}^1 + T_{ij}^2 + \eta_E \sum_{(i, j) \in \IE^2, \, i < j} T_{ij}^3
\end{align*}
with,
\begin{enumerate}
    \item in case of two internal faces,
\[
T_{ij}^3 = -p_{ij}^E \big(\extop{E}(u_h), \extop{E_j}(u_h), \extop{E}(u_h) - \extop{E_j}(u_h) \big) - p_{ji}^E \big(\extop{E}(u_h), \extop{E_i}(u_h), \extop{E}(u_h) - \extop{E_i}(u_h) \big)
\]
\item if $\gamma_i$ is a boundary face,
\[
T_{ij}^3 = -p_{ij}^E \big(\extop{E}(u_h), \extop{E_j}(u_h), \extop{E}(u_h) - \extop{E_j}(u_h) \big) - p_{ji}^E \big(\extop{E}(u_h), \extop{E}^{\gamma_i}(u_h), \extop{E}(u_h) \big).
\]
\end{enumerate}
We also split the central volume stabilization into
\[
J^{1, E}_{h, ij}(u_h, u_h) = \eta_E \big(T_{ij}^{V, 1} + T_{ij}^{V, 2}\big)
\]
with (splitting the sums into contributions from $E_i$ and $E_j$, and contributions from $E$, respectively)
\begin{align*}
T_{ij}^{V, 1} & = \phantomplus \sum_{\mathcal{E} \in \{E_i, E_j\}} \omega_{\mathcal{E}} \big[p_V^E (\extop{E_i}^{ij}(u_h), \extop{E_j}^{ij}(u_h), \extop{\mathcal{E}}^{ij}(u_h)) - \tfrac{2}{K(K-1)} \int_{E}  f (\extop{\mathcal{E}}^{ij}(u_h)) \cdot \nabla \extop{\mathcal{E}}^{ij}(u_h)  \big]\\
& \phantomeq \binplus \: \sum_{\mathcal{E} \in \{E_i, E_j\}} \omega_{\mathcal{E}} p_V^{E,*} (\extop{E_i}^{ij}(u_h), \extop{E_j}^{ij}(u_h), \extop{\mathcal{E}}^{ij}(u_h))\\
& = \phantomplus \sum_{\mathcal{E} \in \{E_i, E_j\}} \tfrac{1}{2} \big[p_V^E (\extop{E_i}^{ij}(u_h), \extop{E_j}^{ij}(u_h), \extop{\mathcal{E}}^{ij}(u_h)) + p_V^{E,*} (\extop{E_i}^{ij}(u_h), \extop{E_j}^{ij}(u_h), \extop{\mathcal{E}}^{ij}(u_h)) \big]\\
& \phantomeq  \: -\sum_{\mathcal{E} \in \{E_i, E_j\}}  \tfrac{1}{2}\tfrac{2}{K(K-1)} \int_{E} f (\extop{\mathcal{E}}^{ij}(u_h)) \cdot \nabla \extop{\mathcal{E}}^{ij}(u_h)
\end{align*}
and
\begin{align*}
T_{ij}^{V, 2} & = \phantomplus \omega_{E} \big[p_V^E (\extop{E_i}^{ij}(u_h), \extop{E_j}^{ij}(u_h), \extop{E}(u_h)) - \tfrac{2}{K(K-1)} \int_{E}  f (\extop{E}(u_h)) \cdot \nabla \extop{E}(u_h) \big]\\
& \phantomeq \binplus \omega_{E} p_V^{E,*} (\extop{E_i}^{ij}(u_h), \extop{E_j}^{ij}(u_h), \extop{E}(u_h))\\
& = -p_V^E (\extop{E_i}^{ij}(u_h), \extop{E_j}^{ij}(u_h), \extop{E}(u_h)) + \tfrac{2}{K(K-1)} \int_{E}  f (\extop{E}(u_h)) \cdot \nabla \extop{E}(u_h)\\
& \phantomeq - p_V^{E,*} (\extop{E_i}^{ij}(u_h), \extop{E_j}^{ij}(u_h), \extop{E}(u_h))
\end{align*}
where we used $\omega_{\mathcal{E}} = \{ -1 \text{ if } \mathcal{E} = E; \frac{1}{2} \text{ else}\}$. Using~\eqref{eq: propagation balance} we rewrite $T_{ij}^{V, 1}$ as
\begin{align*}
T_{ij}^{V, 1} & = \phantomplus \sum_{\mathcal{E} \in \{E_i, E_j\}} \tfrac{1}{2} \big[p_{ij}^E (\extop{E_i}^{ij}(u_h), \extop{E_j}^{ij}(u_h), \extop{\mathcal{E}}^{ij}(u_h)) + p_{ji}^{E} (\extop{E_i}^{ij}(u_h), \extop{E_j}^{ij}(u_h), \extop{\mathcal{E}}^{ij}(u_h)) \big]\\
& \phantomeq  \: -\sum_{\mathcal{E} \in \{E_i, E_j\}}  \tfrac{1}{2}\tfrac{2}{K(K-1)} \int_{E} f (\extop{\mathcal{E}}^{ij}(u_h)) \cdot \nabla \extop{\mathcal{E}}^{ij}(u_h) 
\end{align*}
and similarly $T_{ij}^{V, 2}$ as
\begin{align*}
T_{ij}^{V, 2} & = - p^{E}_{ij}(\extop{E_i}^{ij}(u_h), \extop{E_j}^{ij}(u_h), \extop{E}(u_h)) - p^{E}_{ji}(\extop{E_i}^{ij}(u_h), \extop{E_j}^{ij}(u_h), \extop{E}(u_h))\\
& \phantomeq \binplus \tfrac{2}{K(K-1)} \int_{E} f (\extop{E}(u_h)) \cdot \nabla \extop{E}(u_h) \rangle\\
& \overset{\mathclap{\eqref{eq: unified extension operator}}}{=} - T_{ij}^2 + \tfrac{2}{K(K-1)} \int_{E}  f (\extop{E}(u_h)) \cdot \nabla \extop{E}(u_h)
\end{align*}
Furthermore, we have
\begin{enumerate}
    \item if both faces are internal faces,
\begin{align*}
T_{ij}^1 + T_{ij}^{V, 1} & \overset{\mathclap{\eqref{eq: unified extension operator}}}{=} \phantomplus \tfrac{1}{2} \big[ \phantomplus p_{ij}^E (\extop{E_i}(u_h), \extop{E_j}(u_h), \extop{E_i}(u_h) - \extop{E_j}(u_h))\\
& \phantomeq \phantomminus \hphantom{\tfrac{1}{2} \big[}+ p_{ji}^{E} (\extop{E_i}(u_h), \extop{E_j}(u_h), \extop{E_j}(u_h) - \extop{E_i}(u_h)) \big]\\
& \phantomeq  \: -\sum_{\mathcal{E} \in \{E_i, E_j\}}  \tfrac{1}{2}\tfrac{2}{K(K-1)} \int_{E} f (\extop{\mathcal{E}}^{ij}(u_h)) \cdot \nabla \extop{\mathcal{E}}^{ij}(u_h)
\end{align*}
\item if $\gamma_i$ is a boundary face,
\begin{align*}
T_{ij}^1 + T_{ij}^{V, 1} & \overset{\mathclap{\eqref{eq: unified extension operator}}}{=} \phantomplus \tfrac{1}{2} \big[ \phantomplus p_{ij}^E (\extop{E_j}^{\gamma_i}(u_h), \extop{E_j}(u_h), \extop{E_j}^{\gamma_i}(u_h) - \extop{E_j}(u_h))\\
& \phantomeq \phantomminus \hphantom{\tfrac{1}{2} \big[}+ p_{ji}^{E} (\extop{E_j}^{\gamma_i}(u_h), \extop{E_j}(u_h), \extop{E_j}(u_h) + \extop{E_j}^{\gamma_i}(u_h)) \big]\\
& \phantomeq  \: -\sum_{\mathcal{E} \in \{E_i, E_j\}}  \tfrac{1}{2}\tfrac{2}{K(K-1)} \int_{E} f (\extop{\mathcal{E}}^{ij}(u_h)) \cdot \nabla \extop{\mathcal{E}}^{ij}(u_h) \\
& \overset{\mathclap{\eqref{eq: reflected propagation form}}}{=} \phantomplus \tfrac{1}{2} \big[p_{ij}^E (\extop{E_j}^{\gamma_i}(u_h), \extop{E_j}(u_h), \extop{E_j}^{\gamma_i}(u_h) - \extop{E_j}(u_h)) \big]\\
& \phantomeq  \: -\sum_{\mathcal{E} \in \{E_i, E_j\}}  \tfrac{1}{2}\tfrac{2}{K(K-1)} \int_{E} f (\extop{\mathcal{E}}^{ij}(u_h)) \cdot \nabla \extop{\mathcal{E}}^{ij}(u_h)
\end{align*}
where we used the definition~\ref{def: reflected propagation form} of reflected propagation forms.
\end{enumerate}
Using property~\ref{item: flux potential} of definition \ref{def: energy preserving propagation forms} we rewrite $T_{ij}^{V,2}$ as
\begin{align} \label{eq: TVij2 rewrite}
T_{ij}^{V, 2} & = - T_{ij}^2 + \tfrac{1}{2} \big[ p_{ij}^E(\extop{E}(u_h), \extop{E}(u_h), \extop{E}(u_h)) + p_{ji}^E(\extop{E}(u_h), \extop{E}(u_h), \extop{E}(u_h)) \big]
\end{align}
and $T_{ij}^1 + T_{ij}^{V, 1}$ either,
\begin{enumerate}
    \item if both faces are internal, as
\begin{align*}
T_{ij}^1 + T_{ij}^{V, 1} & = \phantomplus \tfrac{1}{2} \big[ \phantomplus p_{ij}^E (\extop{E_i}(u_h), \extop{E_j}(u_h), \extop{E_i}(u_h) - \extop{E_j}(u_h))\\
& \phantomeq \phantomminus \hphantom{\tfrac{1}{2} \big[}\binplus p_{ji}^{E} (\extop{E_i}(u_h), \extop{E_j}(u_h), \extop{E_j}(u_h) - \extop{E_i}(u_h)) \big]\\
& \phantomeq  \: - \tfrac{1}{2 }\sum_{\mathcal{E} \in \{E_i, E_j\}}  \tfrac{1}{2} \big [p_{ij}^E(\extop{\mathcal{E}}^{ij}(u_h), \extop{\mathcal{E}}^{ij}(u_h), \extop{\mathcal{E}}^{ij}(u_h)) + p_{ji}^E(\extop{\mathcal{E}}^{ij}(u_h), \extop{\mathcal{E}}^{ij}(u_h), \extop{\mathcal{E}}^{ij}(u_h))]
\end{align*}
\item or if $\gamma_i$ is a boundary face, as
\begin{align*}
T_{ij}^1 + T_{ij}^{V, 1} & = \phantomplus \tfrac{1}{2} \big[p_{ij}^E (\extop{E_j}^{\gamma_i}(u_h), \extop{E_j}(u_h), \extop{E_j}^{\gamma_i}(u_h) - \extop{E_j}(u_h)) \big]\\
& \phantomeq  \: - \tfrac{1}{2 }\sum_{\mathcal{E} \in \{E_i, E_j\}}  \tfrac{1}{2} \big [p_{ij}^E(\extop{\mathcal{E}}^{ij}(u_h), \extop{\mathcal{E}}^{ij}(u_h), \extop{\mathcal{E}}^{ij}(u_h)) + p_{ji}^E(\extop{\mathcal{E}}^{ij}(u_h), \extop{\mathcal{E}}^{ij}(u_h), \extop{\mathcal{E}}^{ij}(u_h))]
\end{align*}
\end{enumerate}
We introduce another term $T_{ij}^4$ that takes the form,
\begin{enumerate}
    \item if both faces are internal faces,
\[
T_{ij}^4 = - \tfrac{1}{2} \big[ p_{ij}^E(\extop{E_j}(u_h), \extop{E_j}(u_h), \extop{E_j}(u_h)) + p_{ji}^E(\extop{E_i}(u_h), \extop{E_i}(u_h), \extop{E_i}(u_h)) \big].
\]
\item if $\gamma_i$ is a boundary face,
\[
T_{ij}^4 = - \tfrac{1}{2} p_{ij}^E(\extop{E_j}(u_h), \extop{E_j}(u_h), \extop{E_j}(u_h)).
\]
\end{enumerate}
Then we have, using \eqref{eq: TVij2 rewrite} and property~\ref{item: energy conservative flux} of definition~\ref{def: energy preserving propagation forms}, that
\begin{enumerate}
    \item if both faces are internal,
\begin{align*}
T_{ij}^2 + T_{ij}^{V, 2} + T_{ij}^3 + T_{ij}^4 & = -p_{ij}^E(\extop{E}(u_h), \extop{E_j}(u_h), \extop{E}(u_h) - \extop{E_j}(u_h))\\
& \phantomeq - p_{ji}^E(\extop{E}(u_h), \extop{E_i}(u_h), \extop{E}(u_h) - \extop{E_i}(u_h))\\
& \phantomeq + \tfrac{1}{2} \big[ p_{ij}^E(\extop{E}(u_h), \extop{E}(u_h), \extop{E}(u_h)) + p_{ji}^E(\extop{E}(u_h), \extop{E}(u_h), \extop{E}(u_h)) \big]\\
& \phantomeq - \tfrac{1}{2} \big[ p_{ij}^E(\extop{E_j}(u_h), \extop{E_j}(u_h), \extop{E_j}(u_h)) + p_{ji}^E(\extop{E_i}(u_h), \extop{E_i}(u_h), \extop{E_i}(u_h)) \big]\\
& = 0.
\end{align*}
\item if $\gamma_i$ is a boundary face,
\begin{align*}
T_{ij}^2 + T_{ij}^{V, 2} + T_{ij}^3 + T_{ij}^4 & = -p_{ij}^E(\extop{E}(u_h), \extop{E_j}(u_h), \extop{E}(u_h) - \extop{E_j}(u_h)) - p_{ji}^E(\extop{E}(u_h), \extop{E}^{\gamma_i}(u_h), \extop{E}(u_h))\\
& \phantomeq + \tfrac{1}{2} \big[ p_{ij}^E(\extop{E}(u_h), \extop{E}(u_h), \extop{E}(u_h)) + p_{ji}^E(\extop{E}(u_h), \extop{E}(u_h), \extop{E}(u_h)) \big]\\
& \phantomeq - \tfrac{1}{2} p_{ij}^E(\extop{E_j}(u_h), \extop{E_j}(u_h), \extop{E_j}(u_h))\\
& = 0.
\end{align*}
\end{enumerate}
Using property~\ref{item: energy conservative flux} of definition~\ref{def: energy preserving propagation forms} as well as \eqref{eq: unified extension operator} again we also have,
\begin{enumerate}
    \item if both faces are internal faces,
\begin{align*}
T_{ij}^1 + T_{ij}^{V, 1} - T_{ij}^4 & = \phantomplus \tfrac{1}{2} \big[ \phantomplus p_{ij}^E (\extop{E_i}(u_h), \extop{E_j}(u_h), \extop{E_i}(u_h) - \extop{E_j}(u_h))\\
& \phantomeq \phantomplus \hphantom{\tfrac{1}{2}\big[} + p_{ji}^{E} (\extop{E_i}(u_h), \extop{E_j}(u_h), \extop{E_j}(u_h) - \extop{E_i}(u_h)) \big]\\
& \phantomeq \binminus \tfrac{1}{2 }\sum_{\mathcal{E} \in \{E_i, E_j\}}  \tfrac{1}{2} \big [p_{ij}^E(\extop{\mathcal{E}}(u_h), \extop{\mathcal{E}}(u_h), \extop{\mathcal{E}}(u_h)) + p_{ji}^E(\extop{\mathcal{E}}(u_h), \extop{\mathcal{E}}(u_h), \extop{\mathcal{E}}(u_h))]\\
& \phantomeq \binplus \tfrac{1}{2} \big[ p_{ij}^E(\extop{E_j}(u_h), \extop{E_j}(u_h), \extop{E_j}(u_h)) + p_{ji}^E(\extop{E_i}(u_h), \extop{E_i}(u_h), \extop{E_i}(u_h)) \big]\\
& = 0.
\end{align*}
\item if $\gamma_i$ is a boundary face,
\begin{align*}
T_{ij}^1 + T_{ij}^{V, 1} - T_{ij}^4 & = \phantomplus \tfrac{1}{2} \big[p_{ij}^E (\extop{E_j}^{\gamma_i}(u_h), \extop{E_j}(u_h), \extop{E_j}^{\gamma_i}(u_h) - \extop{E_j}(u_h)) \big]\\
& \phantomeq \binminus \tfrac{1}{2 }\sum_{\mathcal{E} \in \{E_i, E_j\}}  \tfrac{1}{2} \big [p_{ij}^E(\extop{\mathcal{E}}^{ij}(u_h), \extop{\mathcal{E}}^{ij}(u_h), \extop{\mathcal{E}}^{ij}(u_h)) + p_{ji}^E(\extop{\mathcal{E}}^{ij}(u_h), \extop{\mathcal{E}}^{ij}(u_h), \extop{\mathcal{E}}^{ij}(u_h))]\\
& \phantomeq \binplus \tfrac{1}{2}  p_{ij}^E(\extop{E_j}(u_h), \extop{E_j}(u_h), \extop{E_j}(u_h))\\
& \overset{\mathclap{\eqref{eq: reflected propagation form}}}{=} \phantomplus \tfrac{1}{2} \big[p_{ij}^E (\extop{E_j}^{\gamma_i}(u_h), \extop{E_j}(u_h), \extop{E_j}^{\gamma_i}(u_h) - \extop{E_j}(u_h)) \big]\\
& \phantomeq \binminus \tfrac{1}{2 }\sum_{\mathcal{E} \in \{E_i, E_j\}}  \tfrac{1}{2} p_{ij}^E(\extop{\mathcal{E}}^{ij}(u_h), \extop{\mathcal{E}}^{ij}(u_h), \extop{\mathcal{E}}^{ij}(u_h))\\
& \phantomeq \binplus \tfrac{1}{2}  p_{ij}^E(\extop{E_j}(u_h), \extop{E_j}(u_h), \extop{E_j}(u_h))\\
& = 0.
\end{align*}
\end{enumerate}
If there is no face $\gamma_i$ with $i \in \IE$ and $\gamma_i \in \GammaExt(E)$ we are done because in this case we have
\[
J^{0, E}_h(u_h, u_h) + J^{1, E}_h(u_h, u_h) = \eta_E \big(\sum_{(i, j) \in \IE^2, \, i < j} T_{ij}^1 + T_{ij}^2 + T_{ij}^3 + T_{ij}^4 - T_{ij}^4 + T_{ij}^{V, 1} + T_{ij}^{V, 2}\big) = 0.
\]
Otherwise if there is a face (by assumption single) $\gamma_i \in \GammaExt(E)$ we still have to handle
\begin{align*}
J^{0, E}_h(u_h, u_h) + J^{1, E}_h(u_h, u_h) & = \eta_E \big(\sum_{(i, j) \in \IE^2, \, i < j} T_{ij}^1 + T_{ij}^2 + T_{ij}^3 + T_{ij}^4 - T_{ij}^4 + T_{ij}^{V, 1} + T_{ij}^{V, 2}\big)\\
& = \eta \sum_{j \in \IE, j \neq i} \big[p_{ji}^E(\extop{E}(u_h), \extop{E}^{\gamma_i}(u_h), \extop{E}(u_h))\\
& \phantomeq \hphantom{\eta \sum_{j \in \IE, j \neq i} \big[} -\tfrac{1}{2}p_{ji}^E(\extop{E}(u_h), \extop{E}(u_h), \extop{E}(u_h))\big].
\end{align*}
Using property \eqref{eq: propagation forms consistency} of definition~\ref{def: propagation forms} we have
\begin{align*}
    & \phantomeq \eta \sum_{j \in \IE, j \neq i} \big[p_{ji}^E(\extop{E}(u_h), \extop{E}^{\gamma_i}(u_h), \extop{E}(u_h)) -\tfrac{1}{2}p_{ji}^E(\extop{E}(u_h), \extop{E}(u_h), \extop{E}(u_h))\big]\\
    & = \eta \big[b_{\gamma_i}(\extop{E}(u_h), \extop{E}^{\gamma_i}(u_h), \extop{E}(u_h)) - \tfrac{1}{2} b_{\gamma_i}(\extop{E}(u_h), \extop{E}(u_h), \extop{E}(u_h)) \big]\\
    & \overset{\mathclap{\eqref{eq: boundary reflection skew symmetry}}}{=} \eta \big[\tfrac{1}{2}\int_{\gamma_i}\langle A_n u_h, u_h \rangle - \tfrac{1}{2} b_{\gamma_i}(\extop{E}(u_h), \extop{E}(u_h), \extop{E}(u_h)) \big] = 0
\end{align*}
and thus the proof is complete.
\end{proof}
The above proposition ensures energy conservation for the central stabilization term. If $S_n = 0$, this will already be enough to show energy conservation. However, if we want to add dissipation, we would like to have that the dissipative stabilization term $J^s_h$ is non-negative to have an energy dissipative scheme. Unfortunately this is not the case, so we need to look for a different strategy. The trick will be to compensate non-dissipative terms from $J^s_h$ with terms from $s_h$, ensuring that their sum is dissipative. This strategy was already used in \cite{DoD_1d_nonlin_2022} and \cite{dod_icosahom}, although in these works there was no explicit splitting between conservative and dissipative terms.
\begin{lemma}\label{lem: global dissipation}
Let $w_h \in \dspace$ be a discrete function. Then
\begin{equation}
s_h(w_h, w_h) + J^s_h(w_h, w_h) \geq 0.
\end{equation}
\end{lemma}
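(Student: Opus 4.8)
The plan is to exploit the splitting of $J^s_h$ in~\eqref{eq: dod cell term s} into the genuinely dissipative neighbour terms $\eta_E\sum_{i<j}J^{s,E}_{ij}$ and the subtracted copies of the base dissipation on the faces of each small cell, and then to recombine the latter with $s_h$. Throughout I use that $S_n$ is linear, $S_n(u^\mu,u^\nu)=S_n^M(u^\mu-u^\nu)$ with $S_n^M$ positive semi-definite, and that the stabilisation parameter lies in the relevant CFL regime $0\le\eta_E\le1$, equivalently $0\le\capE=1-\eta_E\le1$. The goal is to rewrite $s_h(w_h,w_h)+J^s_h(w_h,w_h)$ as a sum of three groups of terms, each manifestly non-negative.

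First I would treat the neighbour dissipation. Setting the first argument equal to $w_h$ and abbreviating $U_i=\extop{E_i}^{ij}(w_h)$, $U_j=\extop{E_j}^{ij}(w_h)$, the two sums in~\eqref{eq: dod neighbor dissipation} coincide: since $S_n(U_i,U_j)=S_n^M(U_i-U_j)=-S_n(U_j,U_i)$ and the test argument also flips sign, each summand equals $\langle S_n^M(U_i-U_j),U_i-U_j\rangle$. Hence
\begin{equation*}
J^{s,E}_{ij}(w_h,w_h)=\tfrac{1}{3}\sum_{\gamma\in\Gamma_h(E)}\int_{\gamma}\langle S_n^M(U_i-U_j),U_i-U_j\rangle\ \ge\ 0,
\end{equation*}
pointwise by positive semi-definiteness of $S_n^M$ (that the arguments are polynomials restricted to $\gamma$ is immaterial, and the reflected extension operators cause no difficulty). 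Multiplying by $\eta_E\ge0$ and summing over pairs $i<j$ and cells $E\in\cI$ shows the whole neighbour-dissipation contribution is non-negative.

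Next I would recombine the remaining pieces with $s_h$. By assumption~\ref{item: no small neighbors} no face is shared by two small cells, so in $\sum_{E\in\cI}$ every face occurs in $\Gamma_h(E)$ for at most one $E\in\cI$; therefore the subtracted base terms in~\eqref{eq: dod cell term s} do not double count. Adding them to $s_h(w_h,w_h)$ leaves every face-dissipation contribution of $s_h$ intact except those on faces of a small cell $E$, whose weight becomes $1-\eta_E=\capE\ge0$. On an internal face the contribution $\int_{\gamma}\langle S_n^M\llbracket w_h\rrbracket,\llbracket w_h\rrbracket\rangle\ge0$ is immediate, so all rescaled interior terms are non-negative.

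The one delicate point — and the main obstacle — is the reflecting boundary contribution $\int_{\gamma}\langle S_n(w_h,\mathfrak{M}_n(w_h)),w_h\rangle=\int_{\gamma}\langle S_n^M(w_h-\mathfrak{M}_n(w_h)),w_h\rangle$, which is not obviously sign-definite. I would split $w_h=\tfrac12(w_h+\mathfrak{M}_n(w_h))+\tfrac12\delta$ with $\delta:=w_h-\mathfrak{M}_n(w_h)$, giving $\tfrac12\langle S_n^M\delta,\delta\rangle\ge0$ plus a cross term $\tfrac12\langle S_n^M\delta,w_h+\mathfrak{M}_n(w_h)\rangle$ that must be shown to vanish. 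Because $\mathfrak{M}_n$ from~\eqref{eq: reflecting wall boundary operator} is an orthogonal involution, $\delta$ lies in the normal-velocity subspace while $w_h+\mathfrak{M}_n(w_h)$ lies in its orthogonal complement; for $S_n^M$ a multiple of the identity, as in the Lax--Friedrichs choice~\eqref{eq: lax-friedrichs dissipation}, the cross term is proportional to $|w_h|^2-|\mathfrak{M}_n(w_h)|^2$, which vanishes since $\mathfrak{M}_n$ is an isometry. Thus each boundary term is non-negative, hence so is its $\capE$-rescaled version, and collecting the three non-negative groups yields $s_h(w_h,w_h)+J^s_h(w_h,w_h)\ge0$.
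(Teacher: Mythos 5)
Your proof is correct and follows essentially the same route as the paper's: split $J^s_h$ into the manifestly non-negative neighbour terms $\eta_E\sum_{i<j}J^{s,E}_{ij}$ and the subtracted copies of the base face dissipation, recombine the latter with $s_h$ so that each face of a small cell carries the weight $1-\eta_E=\capE\in[0,1]$, and conclude term by term. The only place you go beyond the paper is the reflecting-boundary contribution: the paper simply subsumes $\int_\gamma \langle S_n(w_h,\mathfrak{M}_n(w_h)), w_h\rangle \ge 0$ into the blanket claim that positive semi-definiteness of $S_n^M$ gives $s_h(w_h,w_h)\ge 0$, whereas you correctly note that the cross term $\langle S_n^M \delta, w_h + \mathfrak{M}_n(w_h)\rangle$ has to vanish and verify this only for $S_n^M$ a multiple of the identity (the Lax--Friedrichs choice actually used in the paper); for a general positive semi-definite $S_n^M$ that does not interact compatibly with $\mathfrak{M}_n$ this cross term need not vanish, so your extra care in fact surfaces an implicit compatibility assumption that the paper's own proof glosses over.
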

\begin{proof}
By definition $s_h(w_h, w_h) \geq 0$ since $S_n^M$ is positive semi-definite. Considering
\begin{align*}
J^{s, E}_h(w_h, w_h) & = \phantomminus \eta_E \sum_{(i, j) \in \IE^2, \, i < j} J^{s, E}_{ij}(w_h, w_h)  -\eta_E \sum_{\gamma \in \GammaInt(E) }  \int_{\gamma} \langle S_n(w^L_{\gamma}, w^R_{\gamma}), \llbracket w_h \rrbracket \rangle\\
& \phantomeq \binminus \eta_E \sum_{\gamma \in \GammaExt(E) }  \int_{\gamma} \langle S_n(w_h, \mathfrak{M}_n(w_h)), w_h \rangle
\end{align*}
we note that the first sum can be bounded from below since $J^{s, E}_{ij}(w_h, w_h) \geq 0$, which follows from~\eqref{eq: dod neighbor dissipation} and again that $S_n^M$ is positive semi-definite. We are thus left with the sum
\[
-\sum_{E \in \cI} \bigg [ \eta_E \sum_{\gamma \in \GammaInt(E) }  \int_{\gamma} \langle S_n(w^L_{\gamma}, w^R_{\gamma}), \llbracket w_h \rrbracket \rangle + \eta_E \sum_{\gamma \in \GammaExt(E) }  \int_{\gamma} \langle S_n(w_h, \mathfrak{M}_n(w_h)), w_h \rangle \bigg]
\]
which are indeed negative semi-definite. Since small cells do not have small neighbors, any face $\gamma$ in the above sum appears only once. Furthermore, the integrand $\langle S_n(w^L_{\gamma}, w^R_{\gamma}), \llbracket w_h \rrbracket \rangle$ is precisely the same as in~\eqref{eq: base scheme dissipative term}. Since $\eta_E \in [0, 1]$ we get that
\[
s_h(w_h, w_h) -\sum_{E \in \cI} \bigg [ \eta_E \sum_{\gamma \in \GammaInt(E) }  \int_{\gamma} \langle S_n(w^L_{\gamma}, w^R_{\gamma}), \llbracket w_h \rrbracket \rangle + \eta_E \sum_{\gamma \in \GammaExt(E) }  \int_{\gamma} \langle S_n(w_h, \mathfrak{M}_n(w_h)), w_h \rangle \bigg] \geq 0
\]
which finishes the proof.
\end{proof}

\begin{theorem}
We have for the approximate solution $u_h(t)$ and for any $T > 0$ that
\begin{equation}\label{eq: energy theorem}
\frac{1}{2} ||u_h(T)||^2_{L^2(\Omega)} \leq \frac{1}{2} ||u_h(0)||^2_{L^2(\Omega)}
\end{equation}
with equality if $S_n = 0$, i.e. if there is no dissipation.
\end{theorem}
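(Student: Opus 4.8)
The plan is to prove the stated bound at the \emph{semi-discrete} level by the classical energy argument: test the variational identity~\eqref{eq: stabilized scheme} with $w_h = u_h(t)\in\dspace$. The time-derivative term then becomes $\int_{\Omega}\langle d_t u_h, u_h\rangle = \tfrac{1}{2}\tfrac{d}{dt}\|u_h(t)\|_{L^2(\Omega)}^2$, and the scheme rearranges into
\[
\tfrac{1}{2}\tfrac{d}{dt}\|u_h(t)\|_{L^2(\Omega)}^2 = -\big[a_h + J^a_h + s_h + J^s_h\big](u_h, u_h).
\]
It therefore suffices to show that the right-hand bilinear expression evaluated on the diagonal is non-negative, and that it vanishes identically when $S_n = 0$; the assertion~\eqref{eq: energy theorem} then follows by integrating in $t$ over $[0,T]$.

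First I would dispose of the two \emph{central} (non-dissipative) contributions by showing each vanishes. For $a_h(u_h,u_h)$ this is the energy conservation of the underlying central-flux DG scheme: using the elementwise integration-by-parts identity~\eqref{eq: surface forms int by parts} together with the symmetry of the directional flux matrix $A_n$, the interior volume flux balance telescopes against the jump terms, the cross terms $\langle A_n u^L, u^R\rangle - \langle A_n u^R, u^L\rangle$ cancelling by symmetry, while the boundary contributions collapse to $\tfrac{1}{2}\sum_{\gamma\in\GammaExt}\int_{\gamma}\langle A_n\mathfrak{M}_n(u_h), u_h\rangle$. This last quantity is zero, since chaining the symmetry of $A_n$ with the skew-symmetry~\eqref{eq: boundary reflection skew symmetry} of the mirroring operator gives $\langle A_n\mathfrak{M}_n(u_h),u_h\rangle = -\langle A_n\mathfrak{M}_n(u_h),u_h\rangle$. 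Hence $a_h(u_h,u_h)=0$. For the DoD central stabilization I would invoke Proposition~\ref{prop: l2-conservation} cell by cell: because the central and reflecting-central propagation forms are energy preserving and reflecting, $J^{0,E}_h(u_h,u_h)+J^{1,E}_h(u_h,u_h)=0$ for every $E\in\cI$, so summing over $\cI$ yields $J^a_h(u_h,u_h)=0$.

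It then remains to control the \emph{dissipative} contributions jointly, which is exactly the content of Lemma~\ref{lem: global dissipation}: $s_h(u_h,u_h)+J^s_h(u_h,u_h)\geq 0$, the possibly sign-indefinite boundary-type terms inside $J^s_h$ being absorbed by the genuinely dissipative terms of $s_h$ (using that small cells have no small neighbours and that $\eta_E\in[0,1]$). Combining the three facts gives
\[
\tfrac{1}{2}\tfrac{d}{dt}\|u_h(t)\|_{L^2(\Omega)}^2 = -\big[s_h + J^s_h\big](u_h, u_h) \leq 0,
\]
and integration over $[0,T]$ produces~\eqref{eq: energy theorem}. When $S_n=0$, both $s_h$ and $J^s_h$ vanish identically (they are constructed entirely from $S_n$), so the right-hand side is zero and the inequality becomes an equality, establishing energy conservation.

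The hard part is not the final assembly, which is a direct consequence of Proposition~\ref{prop: l2-conservation} and Lemma~\ref{lem: global dissipation}, but the careful bookkeeping behind $a_h(u_h,u_h)=0$: one must orient the face normals consistently so that the interior volume-flux balance telescopes against the jump terms, and then treat the reflecting boundary by composing the symmetry of $A_n$ with the skew-symmetry of $\mathfrak{M}_n$. A secondary point worth flagging is that the statement as proved is the \emph{semi-discrete} energy law; to obtain a fully discrete analogue one would additionally exploit the strong-stability-preserving structure of the explicit Runge--Kutta method, propagating the estimate stagewise as a convex combination of forward-Euler updates under the relaxed CFL condition afforded by the capacity $\capE$.
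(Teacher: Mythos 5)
Your proposal is correct and follows essentially the same route as the paper: test \eqref{eq: stabilized scheme} with $w_h=u_h(t)$, kill the central terms via $a_h(u_h,u_h)=0$ and Proposition~\ref{prop: l2-conservation}, bound the dissipative terms by Lemma~\ref{lem: global dissipation}, and integrate in time. The only difference is that you spell out the standard argument for $a_h(u_h,u_h)=0$ (which the paper simply cites as a known result), and your sketch of it — integration by parts plus symmetry of $A_n$ and the skew-symmetry \eqref{eq: boundary reflection skew symmetry} — is sound.
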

\begin{proof}
Testing with $u_h(t)$ in~\eqref{eq: stabilized scheme} gives
\[
\frac{1}{2} \partial_t ||u_h(t)||_{L^2(\Omega)}^2 + a_h(u_h(t), u_h(t)) + J^a_h(u_h(t), u_h(t)) + s_h(u_h(t), u_h(t)) + J^s_h(u_h(t), u_h(t)) = 0.
\]
Integration over $(0, T)$ yields
\begin{equation*}
\begin{split}
\frac{1}{2} ||u_h(T)||^2_{L^2(\Omega)} & = \frac{1}{2} ||u_h(0)||^2_{L^2(\Omega)} - \int_0^T \phantomplus a_h(u_h(t), u_h(t)) + J^a_h(u_h(t), u_h(t))\\
& \phantomeq \hphantom{\frac{1}{2} ||u_h(0)||^2_{L^2(\Omega)} - \int_0^T} + s_h(u_h(t), u_h(t)) + J^s_h(u_h(t), u_h(t)).
\end{split}
\end{equation*}
It is a known result that $a_h(u_h(t), u_h(t))$ vanishes and due to proposition~\ref{prop: l2-conservation} the central stabilization term $J^a_h(u_h(t), u_h(t))$ also vanishes. We are thus left with the dissipative terms
\[
s_h(u_h(t), u_h(t)) + J^s_h(u_h(t), u_h(t))
\]
which we know to be non-negative from lemma~\ref{lem: global dissipation}. If $S_n$ is zero, they vanish and we have equality.
\end{proof}

\section{Numerical results} \label{sec: numerical section}

We present numerical results to demonstrate the stability and accuracy properties of our scheme. 

\subsection{Implementation}
We implement our numerical tests using the PDE framework DUNE \cite{dune-recent, dunepaperI:08, dunepaperII:08, dune-functions-1, dune-functions-2}. The cut-cell construction builds upon the TPMC~\cite{tpmc} library and parallel execution of test cases uses GNU Parallel~\cite{tange_gnu_parallel}.
For reproducibility the complete implementation with the main dependencies and build instructions can be found under~\cite{zenodo-code}.

For numerical simulations we need to pick a time step size and a threshold for the small cell classification~\eqref{eq: set of stab cut cells}. As already mentioned our goal is to solve the \textit{small cell problem}, i.e., we want to choose a time step size based on background cells even in the presence of arbitrarily small cut cells. Let $h$ denote the face length of a cartesian background element. A typical CFL condition for a DG scheme solving the wave equation would be
\begin{equation}\label{eq: standard cfl condition}
    \Delta t \leq \frac{1}{2r + 1}\frac{h}{c}
\end{equation}
where $c$ is the speed of sound. Using this CFL condition would essentially require us to stabilize every cut cell that is only a little bit smaller than background cells, meaning a value of $\alpha$ close to $1$ in~\eqref{eq: set of stab cut cells}. Instead we relax~\eqref{eq: standard cfl condition} by a constant factor $\frac{1}{4}$ to get a CFL condition
\begin{equation}\label{eq: cut cell cfl condition}
    \Delta t \leq \frac{1}{4}\frac{1}{2r + 1}\frac{h}{c}
\end{equation}
which still depends only on the background cell but does not require stabilization on large cut cells. For $\alpha$ in~\eqref{eq: set of stab cut cells} we choose $\alpha = \frac{1}{10}$ which in combination with~\eqref{eq: cut cell cfl condition} has worked out well in our numerical tests. 

For the cell stabilization terms introduced in definition~\ref{def: cell stabilization terms} we need to pick a set of propagation forms. We will always use the reflecting set of propagation forms defined in example~\ref{exam: reflecting propagation forms} which has all the properties we need for energy preservation and we will verify numerically that this is indeed the case.

To integrate the semidiscrete approximations in time, we use the following explicit strong-stability preserving (SSP) Runge-Kutta methods:
\begin{itemize}
    \item SSPRK(2,2): The two-stage, second-order method of Heun \cite{shu1988efficient}
    \item SSPRK(3,3): The three-stage, third-order method of Shu and Osher \cite{shu1988efficient}
    \item SSPRK(10,4): The ten-stage, fourth-order method of Ketcheson \cite{ketcheson2008highly}
\end{itemize}
Results on fully-discrete stability for linear problems for these methods have been obtained, e.g., in \cite{tadmor2002semidiscrete,ranocha2018L2stability,sun2019strong,tadmor2025stability}.

\subsection{Error measures}
We are interested in the behavior of the error of the discrete solution under mesh refinement of the background mesh, measured under various norms.
A natural choice for linear hyperbolic systems is the $L^2$ norm
\begin{equation}
    \left\|{e}\right\|_{L^2} = \left(\sum_{E\in\Mesh} \int_E {e}^2 dx \right)^{\frac 1 2}.
\end{equation}
As is often done for hyperbolic PDEs on cut-cell meshes we further investigate the error in the $L^\infty$-norm
\begin{equation}
\left\|e\right\|_{\infty} = 
    \sup\limits_{E\in{\Mesh}}
    \left\|e\right\|_{E,\infty}
    \approx
    \max\limits_{E\in{\Mesh}} \max\limits_{(x,\omega) \in \text{QR}(E)}|e(x)|
\end{equation}
which we approximate via a pointwise evaluation at the points of high-order quadrature rules $QR(E)$ on the mesh elements.
Due to their difference in size, it is to be expected that tiny cut cells induce numerical round-off errors that will be visible in the $L^\infty$-norm. To explore this issue, we introduce the filtered mesh
\begin{equation}
{\Mesh}_{\!,\rho} := \left\{ E \in \Mesh: \alpha_E > \rho \right\},
\end{equation}
and on this filtered mesh the filtered $L^\infty$-norm
\begin{equation} \label{eq: filtered inf norm}
    \left\|e\right\|_{\infty,\rho} = 
    \sup\limits_{E\in{\Mesh}_{\!,\rho}}
    \left\|e\right\|_{E,\infty}
    \approx
    \max\limits_{E\in{\Mesh}_{\!,\rho}} \max\limits_{(x,\omega) \in \text{QR}(E)} |e(x)|
\end{equation}
which computes an approximation of the pointwise error on all cells with a cell volume fraction larger than $\rho > 0$, i.e. it skips small cells during computation of the error. We want to point out again that the mesh ${\Mesh}_{\!, \rho}$ is only used to evaluate $\left\|e\right\|_{\infty,\rho}$. We never skip very small cells when computing a discrete solution.

The components of the exact solutions in our test cases will be denoted by $p$, $v_1$ and $v_2$, as above. Their discrete counterparts will be denoted by $p_h$, $v_{h, 1}$ and $v_{h, 2}$.

\subsection{Test 1: Rotated square}%
\begin{figure}[!htbp]
    \centering
    \begin{tikzpicture}[scale=2.5]
    \begin{scope}[shift={(0.58, 0)}]
    \draw[fill=gray] (-0.58, 0) -- (0.82, 0) -- (0.82, 1.4) -- (-0.58, 1.4) -- cycle;
    \end{scope}
    
    \begin{scope}[shift={(0.58, 0)}, rotate=35]
        \draw[fill= white] (0,0) -- (1, 0)  --(1, 1) -- (0, 1) -- cycle;
    \end{scope}
    
    \draw[step=0.1,gray] (0, 0) grid (1.4, 1.4);
    \end{tikzpicture}
    \caption{Test 1: Rotated square geometry and cut cell mesh example}
    \label{fig: rotated square}
\end{figure}
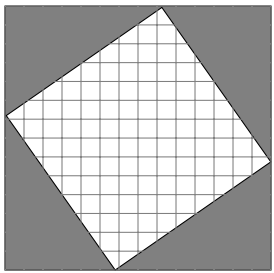
To verify the accuracy of our scheme we perform a convergence analysis on a rotated unit square that is placed inside a structured Cartesian grid, see Figure~\ref{fig: rotated square} for an illustration. On the unit square $\dhat\Omega = [0,1]^2$ our reference solution is given as
\begin{align*}
p(\dhat{x}, t) & = \sqrt{2} \pi [\sin(\sqrt{2} \pi t) - \cos(\sqrt{2} \pi t )] \cos(\pi \dhat x_1) \cos(\pi \dhat x_2),\\
v_1(\dhat x, t) & = -\pi [\cos(\sqrt{2}\pi t) + \sin(\sqrt{2}\pi t)] \sin(\pi \dhat x_1) \cos(\pi \dhat x_2),\\
v_2(\dhat x, t) & = -\pi [\cos(\sqrt{2}\pi t) + \sin(\sqrt{2}\pi t)] \cos(\pi \dhat x_1) \sin(\pi \dhat x_2)
\end{align*}
with $\dhat x \in \dhat \Omega$
and satisfies a reflecting wall boundary condition. By scaling and rotation the initial data given by the reference solution is then transformed onto the computational domain.
\begin{figure}[!htbp]
    \centering
    \includegraphics[width=\linewidth]{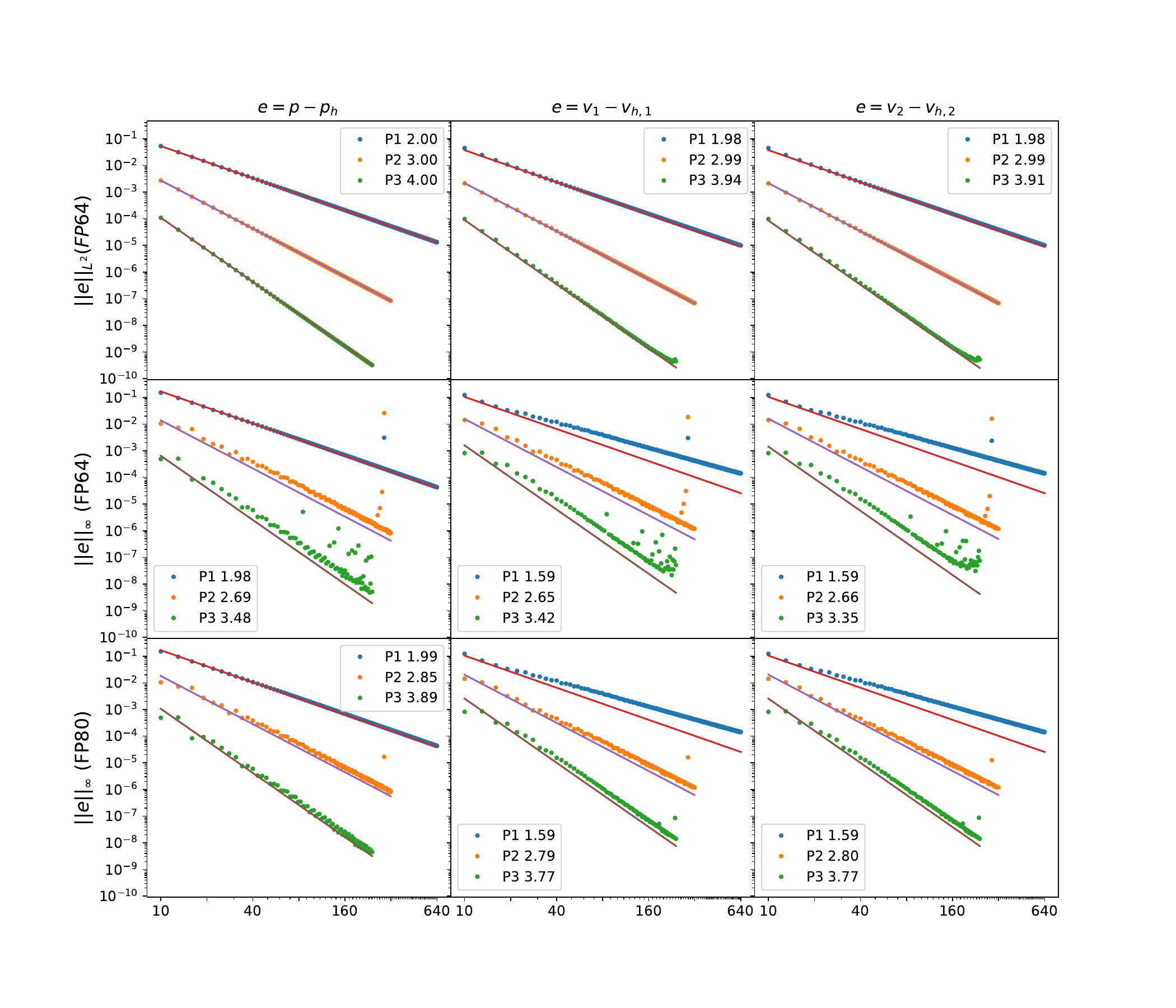}
    \caption{Numerical convergence results for polynomial degrees $1$, $2$ and $3$ on the rotated square. The three columns state the error in the pressure, first velocity and second velocity component, respectively. These errors are measured in both
    the $||\cdot||_{L^2}$ (first row) and $||\cdot||_{\infty}$ (second row) norm. In both cases the results where computed in double precision (FP64). While the convergence in the $||\cdot||_{L^2}$ norm shows the expected behavior, we see a reduced convergence rate and significant outliers in the $||\cdot||_{\infty}$. The last row again shows the error in the $||\cdot||_{\infty}$ but this time computed in extended double precision (FP80). Compared to the FP64 results, wee see a significant improvement, indicating that the outliers are caused by numerical precision issues.}
    \label{fig: rotated square results double}
\end{figure}

We compute the errors of the components in the $||\cdot||_{L^2}$ and $||\cdot||_{\infty}$ norm at the final time $T=1$ for polynomial degrees $r=1, 2, 3$, using the SSPRK schemes of corresponding order listed above.
The results are plotted in the first two rows of Figure~\ref{fig: rotated square results double} where the mesh resolution is described in terms of the number of background cells in one direction. Due to the way we construct our geometry, this leads to $h = \frac{1}{N}[\cos(\gamma) + \sin(\gamma)]$ where $h$ is the face length of a background cell and $\gamma$ the rotation angle ($35^{\circ}$ in this case).

We observe optimal convergence for the error in the $L^2$-norm for $p$ and $v$. In the $L^\infty$ norm the error shows a reduced convergence and (seemingly random) outliers.

Seeing a reduction of the convergence rate for cut-cell methods is not uncommon since the regular refinement process on the background mesh does not translate to a regular refinement on the cut cells, which can change drastically depending on the background mesh resolution.
To investigate whether this is a fundamental defect of our scheme or rather a numerical issue we rerun 
the test with a higher floating-point precision of 80 bit as opposed to the standard 64 bit of double precision. The results are compiled in the last row of figure~\ref{fig: rotated square results double} and look significantly better. Still there are some lesser issues to point out. The pointwise convergence rate for the velocity is still reduced, albeit mostly for $P^1$. We note that the rate of about $1.6$ is similar to the rates reported for the linear advection equation both for the DoD method in \cite{DoD_2d_linadv_2020, dod_icosahom} and for the SRD method in \cite{srd_dg}. Since these two methods are very different, this might point out a more fundamental issue for $P^1$ approximations on cut cells. For $P^2$ and $P^3$ the situation seems better, even though our rates for the wave equation are slightly worse than what is reported in \cite{dod_icosahom} and \cite{srd_dg}. This might still be caused by numerical inaccuracies, as indicated by remaining outliers in our data plots.

To investigate the issue further, we consider a $289 \times 289$ mesh, which contains extremely small cut cells, and a $P^2$ polynomial approximation. The aim is to investigate whether the significant increase in the (pointwise) error is restricted to (very) small cells or whether it propagates into the global domain. To do so we measure the error in the filtered $L^\infty$-norm $\|\cdot\|_{\infty,\rho}$ as defined in \eqref{eq: filtered inf norm}. As filter threshold $\rho$ we use values corresponding roughly to the different volume fractions occurring in the cut-cell mesh. The results are compiled in Table~\ref{tab: very small cell errors}. We see that for this mesh configuration the pointwise error deteriorates severely only on the very small cells with a volume fraction of about $1e^{-12}$ (and absolute volume of about $1e^{-16}$). We conclude that the increase in the error on the very small cells does not propagate into the domain or even into the neighboring cells, indicating the stability of our scheme.

We have to state here that the handling of cells that have a volume close to machine precision is a delicate issue. Often these very small cells are simply skipped during the meshing process and subsequently during computation. We do not want to make an attempt here on deciding which cells should be skipped but rather only point out that even in the presence of such very small cells the implementation of our scheme seems to stay stable and preserve accuracy around these cells.

\begin{table}[!htbp]
    \centering
    \begin{tabular}{c||D{x}{\cdot}{-1}D{x}{\cdot}{-1}D{x}{\cdot}{-1}}
         Threshold $\rho$
         & \multicolumn{1}{c}{$\left\|p_h-p\right\|_{\infty,\rho}$}
         & \multicolumn{1}{c}{$\left\|v_{h,1}-v_1\right\|_{\infty,\rho}$}
         & \multicolumn{1}{c}{$\left\|v_{h,2}-v_2\right\|_{\infty,\rho}$} \\
         \hline
         $10^{-12}$ & 2.60128x10^{-2} & 1.83481x10^{-2} & 1.5996x10^{-2} \\
         $10^{-7}$ & 1.25392x10^{-6} & 1.67888x10^{-6} & 1.67955x10^{-6} \\
         $10^{-5}$ & 1.25392x10^{-6} & 1.67888x10^{-6} & 1.67955x10^{-6} \\
         $10^{-4}$ & 9.90338x10^{-7} & 1.67888x10^{-6} & 1.67955x10^{-6} \\
         $10^{-2}$ & 7.75344x10^{-7} & 1.67888x10^{-6} & 1.67955x10^{-6} \\
         $10^{-1}$ & 4.97874x10^{-7} & 1.67888x10^{-6} & 1.67955x10^{-6}
    \end{tabular}
    \caption{Errors in the filtered $L^\infty$-norm \eqref{eq: filtered inf norm} for the rotated square with $289 \times 289$ background elements and different values for the threshold $\rho$. These thresholds correspond roughly to volume fractions $\alpha_E$ of cut cells occuring in the mesh. If we skip the tiny cells with $\alpha_E \approx 10^{-12}$ (and an absolute volume of about $10^{-16}$ on this mesh) we see a strong improvement of the error.}
    \label{tab: very small cell errors}
\end{table}

\FloatBarrier
\subsection{Test 2: Rotated channel}
In this second test we investigate the long term behavior of the stabilized method.
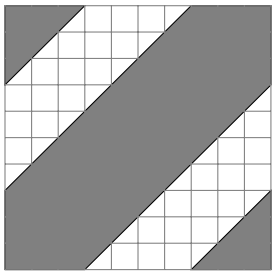
\begin{figure}[!htbp]
    \centering
    \begin{tikzpicture}[scale=3.5]

    \draw[fill=gray] (0.0, 1.0) -- (0.3001, 1.0) -- (0.0, 0.6999) -- cycle;

    \draw[fill=gray] (0.0, 0.0) -- (0.0, 0.2999) -- (0.7001, 1.0) -- (1.0, 1.0) -- (1.0, 0.6999) -- (0.3001, 0.0) -- cycle;

    \draw[fill=gray] (1.0, 0.0) -- (1.0, 0.2999) -- (0.7001, 0.0) -- cycle;
    
    \draw[step=0.1,gray] (0, 0) grid (1.0, 1.0);
    \end{tikzpicture}
    \caption{Rotated channel geometry and cut-cell mesh example}
    \label{fig: rotated channel}
\end{figure}
\begin{figure}[!htbp]
    \centering
    \includegraphics[width=\linewidth]{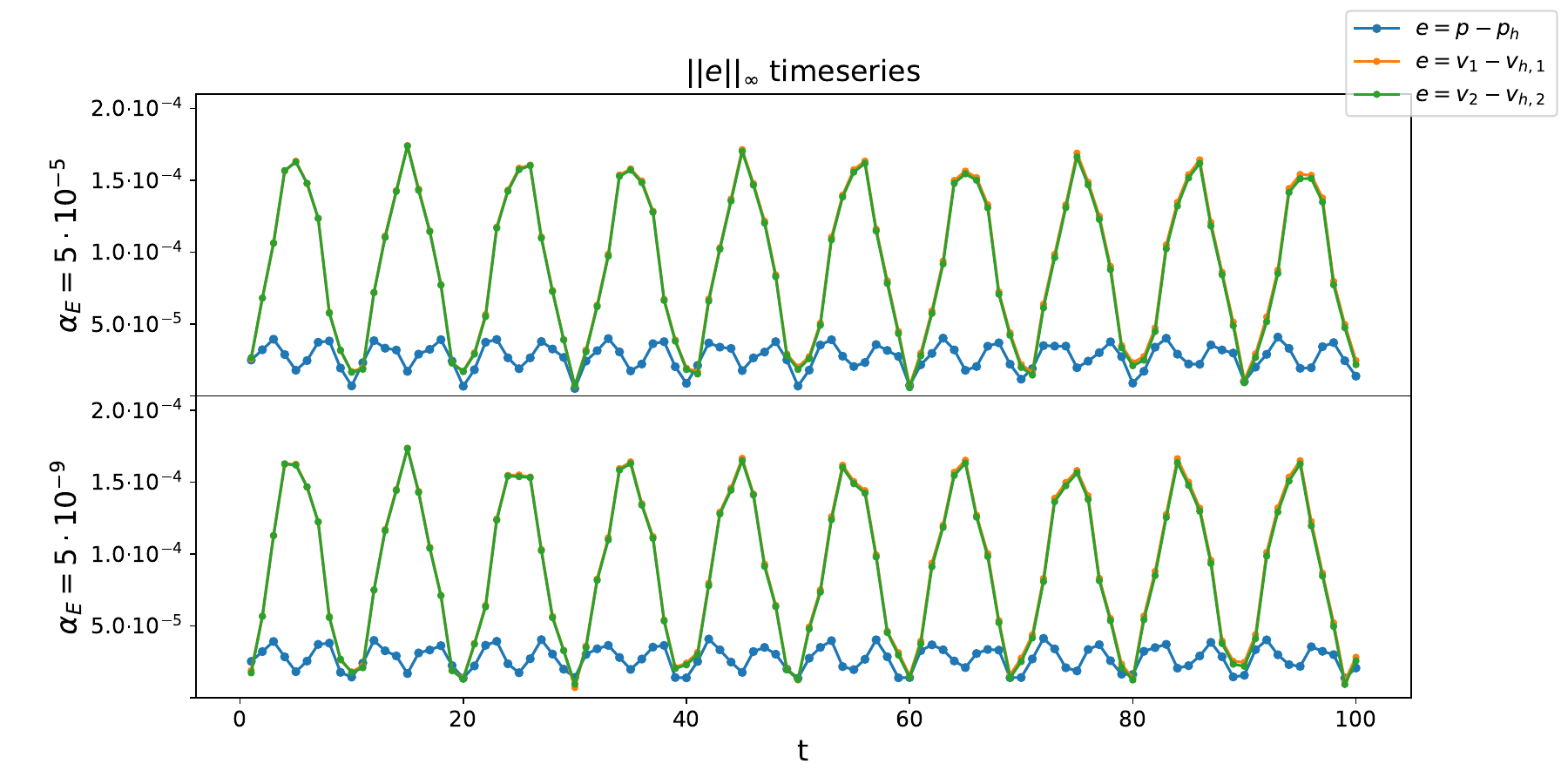}
    \caption{Behavior of the error in the $||\cdot||_{\infty}$ norm for two configurations of the channel test in the time interval $[0, T]$. The two configurations have small cells with volume fraction $\alpha_E = 5\cdot10^{-5}$ (top row) and $\alpha_E = 5\cdot10^{-9}$ (bottom row). We can see some oscillatory behavior, especially for the velocity errors but overall all errors stay bounded.}
    \label{fig: rotated channel error timeseries}
\end{figure}
\begin{table}[!htbp]
    \centering
    \begin{tabular}{c c|c|c}
         & \multicolumn{3}{c}{Total energy $E_h(t)$}\\
         & Time & $\alpha_E = 10^{-5}$ & $\alpha_E = 10^{-9}$ \\
         \hhline{~ =|=|=}
         \multirow{3}{*}{No dissipation} & $t = 0$ & 1.199999999981807 & 1.199999999981823 \\
         & $t = 20$ & 1.199999999981692 & 1.199999999981697   \\
         & $t = 40$ & 1.199999999981597 & 1.199999999981622 \\
         & $t = 60$ & 1.199999999981529 & 1.199999999981551 \\
         & $t = 80$ & 1.199999999981413 & 1.199999999981459 \\
         & $t = 100$ & 1.19999999998137 & 1.199999999981395 \\
         \cline{2-4}
         \multirow{3}{*}{With dissipation} & $t = 0$ & 1.199999999981807 & 1.199999999981823 \\
         & $t = 20$ & 1.199999443849309 & 1.199999443921604   \\
         & $t = 40$ & 1.199998887680314 & 1.199998887824876 \\
         & $t = 60$ & 1.199998331511058 & 1.199998331727847 \\
         & $t = 80$ & 1.199997775342008 & 1.199997775631023 \\
         & $t = 100$ & 1.199997219173264 & 1.199997219534485
    \end{tabular}
    \caption{Total energy $E_h(t)$ values after specific times for the rotated channel test. The left column features energy values on a channel where small cells have a volume fraction of $\alpha_E = 10^{-5}$. The right columns does so for a channel with small cell volume fraction of $\alpha_E = 10^{-9}$. In the first row we have results in case of no added dissipation, i.e. $S_n = 0$. Here we see essentially no change in the total energy. The second row displays results if $S_n$ is the classical Lax-Friedrichs dissipation. We can see the expected drop in the total energy.}
    \label{tab: total energy channel}
\end{table}
Here we want to investigate the long term behavior of our scheme. We consider as a background mesh a unit square with periodic boundary conditions and introduce cuts at a $45^{\circ}$ angle inside the square to generate a periodic channel as depicted in Figure~\ref{fig: rotated channel}. Along the cut interfaces we specify a reflecting wall boundary condition. The result is a channel on the torus through which we send a plane wave. The initial data is given as
\begin{align*}
p(\dhat{x}, t) & = \sin(2 \pi [\dhat{x}_1 - t]),\\
v_1(\dhat{x}, t) & = \sin(2 \pi [\dhat{x}_1 - t]),\\
v_2(\dhat{x}, t) & = 0
\end{align*}
on the unit square and then transformed onto the computational domain. The channel length is $l = \sqrt{2}$, corresponding to a cut angle of $45^{\circ}$. As final time we choose $T=100\cdot l$. Further we set $S_n = 0$, meaning that we do not add dissipation. For the discretization we choose a $P^2$ discrete space and the standard explicit SSP-RK3 scheme. Two tests are run on a resolution of $100$ background cells in each direction, with offsets for the channel such that we have a volume fraction for the small cells of $\alpha_E = 10^{-5}$ and $\alpha_E = 10^{-9}$, respectively. Figure~\ref{fig: rotated channel error timeseries} shows plots for the pointwise error at different times for both mesh configurations. We see some oscillations in the error, particularly for the velocity, but overall the error stays bounded.

To display the energy conservation and stability properties of our scheme, we repeat the test on a mesh with 50 background cells and a fourth order SSP-RK scheme so that the effect of the time discretization error is minimized. We consider again $S_n = 0$, i.e., no dissipation, and in addition a Lax-Friedrichs dissipation. Table~\ref{tab: total energy channel} contains the total energy values
\begin{equation}
E_h(t) = ||u_h(\cdot, t)||_{L^2}
\end{equation}
at certain points in time for both mesh configurations. The upper half shows the energy values for a conservative discretization (i.e. $S_n = 0$) while the lower half does so for a Lax-Friedrichs type dissipation. In the conservative case we see very small changes in the energy even for $T=100$ while for the dissipative case we can see significant dissipation of energy.

% don't put figures after the conclusions
\FloatBarrier

\section{Conclusions}
In this paper we have presented a generalized construction of a DoD stabilization for cut-cell DG discretizations to support linear systems of first order hyperbolic PDEs, in particular the linear wave equation. The construction is based on propagation forms that mimic the transport across small cut cells and fulfill an integration-by-parts type consistency. With this we proved energy preservation of the method, which is also backed by our numerical tests. The method shows optimal convergence and is robust in long-time simulations. The generalized construction explicitly distinguishes between central fluxes and diffusive stabilization and by this is able to control the energy error and even construct a fully energy conservative discretization.

In future work we will extend this method to non-linear settings, in particular the Euler equations. Also the abstract construction via propagation forms should ease the path to a robust stabilization in 3D.
\subsection*{Funding}
The research of GB, CE, LP, and HR was supported by the Deutsche Forschungsgemeinschaft (DFG, German Research Foundation) - SPP 2410 Hyperbolic Balance Laws in
Fluid Mechanics: Complexity, Scales, Randomness (CoScaRa), specifically within the project 526031774 (EsCUT: Entropy-stable high-order CUT-cell discontinuous Galerkin methods).
SM, CE and GB gratefully acknowledge support by the Deutsche Forschungsgemeinschaft (DFG, German Research Foundation) - 439956613 (Hypercut).
CE and GB further acknowledge support by the Deutsche Forschungsgemeinschaft (DFG, German Research Foundation) under Germany's Excellence Strategy EXC 2044 –390685587, and EXC 2044/2 –390685587, Mathematics Münster: Dynamics–Geometry–Structure.

\printbibliography%{biblio}
%\bibliographystyle{plain}
%\bibliography{biblio}

\end{document}